\setlist[description]{leftmargin=0cm,  labelindent=\parindent}
\renewcommand{\tilde}{\widetilde}
\newcommand{\pp}{\mathbb P}
\newcommand{\wt}{\widetilde}
\newcommand{\inv}{^{-1}}
\newcommand{\I}{\mathrm{i}}
\title{$(p,d)$-elliptic  curves of genus two}
\author{Marco Franciosi}
\address{Dipartimento di Matematica, Universit\`a di Pisa , Largo B. Pontecorvo 5, I-56127  Pisa, Italy}
\email{marco.franciosi@unipi.it}
\author{Rita Pardini}
\address{Dipartimento di Matematica, Universit\`a di Pisa , Largo B. Pontecorvo 5, I-56127  Pisa, Italy}
\email{rita.pardini@unipi.it}
\author{S\"onke Rollenske}
\address{S\"onke Rollenske\\FB 12/Mathematik und Informatik\\
Philipps-Universit\"at Marburg\\
Hans-Meerwein-Str. 6\\
35032 Marburg\\
Germany}
\email{rollenske@mathematik.uni-marburg.de}
\begin{document}
\begin{abstract}
We study  stable curves of arithmetic genus 2 which admit two morphisms of  finite degree $p$, resp. $d$, onto smooth elliptic curves,
with particular attention to the case $p$ prime.
\end{abstract}
\subjclass[2010]{14H52, 14H45}
\keywords{(p,d)-elliptic configurations, (p,d)-elliptic genus two curve}
\maketitle

\rightline{\textsc{\scriptsize Preliminary version \today}}
\maketitle

\setcounter{tocdepth}{1}
\tableofcontents

\section{Introduction}

In this paper we consider  stable curves of arithmetic genus 2 which admit a  $(p,d)$-elliptic configurations, namely two morphisms of  finite degree $p$, resp. $d$, onto smooth elliptic curves $D$ and $E$:
\begin{equation*}
  \begin{tikzcd}
   {} &C \arrow{dl}{p:1}[swap]{f}\arrow{dr}{g}[swap]{d:1}\\
   E && D
  \end{tikzcd}.
 \end{equation*} 
A  curve of genus $2$ is called $(p,d)$-elliptic curve if it admits a  $(p,d)$-elliptic configuration. If $p=2, d=3$ we use the terminology bi-tri-elliptic curve.

Frey and Kani in  \cite{frey-kani}  studied  genus 2 coverings of elliptic curves, i.e., genus 2  $d$-elliptic curves.  
A fundamental tool in  their research was the observation that given a degree $d$ map $f\colon C\to E$ onto an elliptic curve then there exists a  
complementary  map $f ' \colon C\to E'$ of the same degree $d$  onto a second elliptic curve (see \S  \ref{sec:d-ell}). 
Later Kani in \cite{kani-number}  studied in details the arithmetic properties  of such $(d,d)$-elliptic configurations, also  providing existence results.

In this paper we study $(p,d)$-elliptic  configurations  following the construction described  by  Frey and Kani in  \cite{frey-kani}. 

It  will turn  out that stable $(p,d)$-elliptic curves of arithmetic genus $2$ are automatically of compact type, i.e., they have  compact Jacobian; thus in the first section we recall  the Frey-Kani construction, noting that it extends  to curves of compact type.

In Section \ref{section:(p,d)} we study $(p,d)$-elliptic curves, with particular  attention to the case $p$ prime. In Theorem \ref{thm:2d} we give a classification of such curves and in Section \ref{sec:existence} we show   that for every pair of integers  $p,d>1$  there exists a smooth $(p,d)$-elliptic curve of genus 2.

The original motivation for this article was the study of bi-tri-elliptic configurations, which parametrise certain strata in the boundary of the moduli space of stable Godeaux surfaces (see \cite{FPR16b}). Thus we describe the geometry of bi-tri-elliptic configurations in a little more detail in the last section.

\subsubsection*{Acknowledgements}
We wish to thank Ernst Kani and Angelo Vistoli for useful mathematical communications. 
  The second  author  is a  member of GNSAGA of INDAM. The third author is grateful for support of the DFG through the Emmy Noether program and partially through SFB 701. This project was partially supported by PRIN 2010 ``Geometria delle Variet\`a Algebriche'' of italian MIUR.

\section{$d$-elliptic curves of genus 2}\label{sec:d-ell}
Here we  recall and slightly refine some results from \cite[\S 1]{frey-kani}, where the focus is on smooth curves and on  the case $d$ odd (see below).
\subsection{Set-up and preliminaries}
We   work over an algebraically closed    field $\IK$ whose  characteristic  does not divide the degree $d$ of the finite morphisms that we consider. Throughout all this  section $C$ is a stable curve of genus $2$ and 
$J = J(C)$ is  the Jacobian of $C$. 

\begin{defin} \label{def:d-ell}
 Let $d\ge 2$ be an integer. We say that $C$ is {\em $d$-elliptic} if there exists a finite degree $d$ morphism $f\colon C\to E$ such that $E$ is a smooth curve of genus 1 and $f$ does not factor through an \'etale cover of $E$; we call $f$ a  {\em $d$-elliptic map}. 
Sometimes,  a $d$-elliptic map is      called  an  ``elliptic subcover'' and the curve $C$ is said to have an ``elliptic differential'' (cf. \cite{kani-number}); our choice of terminology is due to the fact that we wish to emphasize  the degree $d$ of the map.
 For $d=2,3$, the curve $C$ is also called {\em bi-elliptic}, resp. {\em tri-elliptic}. 
 
  An isomorphism of $d$-elliptic curves $f_i\colon C_i\to E_i$, $i=1,2$,  is a pair of  isomorphisms $\phi\colon C_1\to C_2$  and $\bar\phi\colon E_1\to E_2$ such that  $f_2\circ \phi =\bar \phi\circ f_1$.
\end{defin}
  
 For an abelian variety $A$ we denote by $A[d]$ its subgroup of $d$-torsion points.
If $A$ is principally polarised then there is a non degenerate  alternating pairing $e_d\colon A[d]\times A[d] \to \mu_d$ 
(where $\mu_d$  denotes the $d$-th roots  of unity)
called Weil pairing (or Riemann form \cite[Ch.IV,~\S20]{Mumford_Abelian}).

If $A'$ is an  abelian variety,  then we call a group homomorphism $\alpha\colon A[d]\to A'[d]$ \emph{anti-symplectic} if  for every $P,Q\in A[d]$ one has:
$$e_d(\alpha(P),\alpha(Q))=e_d(P,Q)\inv,$$
or, equivalently, if the graph of $\alpha$ is an isotropic subgroup of $(A\times A')[d]$.

\subsection{The Frey-Kani construction}\label{Frey-Kani construction}
Now assume that  $C$ is a stable genus 2 curve of compact type, 
i.e., it is either smooth or the union of two elliptic curves intersecting in one point. Notice that  the Jacobian  $J = J(C)$  is a principally polarised abelian surface. 

Let 
$f\colon C\to E$ be a $d$-elliptic map on $C$. 
 The pull back map $f^*\colon E\to J$ is injective, hence  the norm map $f_*\colon J\to E$ has connected kernel $E'$.  Since the composition $f_*f^*\colon E \to E$ is  multiplication by $d$, the abelian subvarieties  $E'$ and $f^*E$ of $J$  intersect in $E[d]$ and we have a tower of isogenies
  \begin{equation}\label{eq: tower of isogenies}\begin{tikzcd} E\times E' \rar{h }[swap]{d^2:1} & J \rar{h'}[swap]{d^2:1}  & E\times E'\end{tikzcd},
  \end{equation}
   whose composition is  multiplication by $d$. Composing the Abel-Jacobi map $C\into J$ with the projection to $E'$ we get a second $d$-elliptic map $f'\colon C\to E'$, which we call the       
\emph{complementary}  $d$-elliptic map. 

The construction that follows, which we call the Frey-Kani construction,  has been described  in \cite[\S 1]{frey-kani} for smooth curves, but the proof works verbatim for stable curves of compact type.  Therefore one has:  
\begin{prop}\label{prop:Jd}
Let $C$ be a stable $d$-elliptic  curve of genus $2$  of compact type,  
let  $f\colon C\to E$ and $f'\colon C\to E'$ be  complementary $d$-elliptic maps and let  $h\colon E\times E'\to J=J(C)$ be as in \eqref{eq: tower of isogenies}. Then: 
 \begin{enumerate}
 \item there exists an anti-symplectic  isomorphism $\alpha \colon E[d]\to E'[d]$ such that $\ker h$ is the graph $H_{\alpha}$ of $\alpha$;
 \item the principal polarization on $J$ pulls back to $d(E\times\{0\}+\{0\}\times E').$
\end{enumerate}
\end{prop}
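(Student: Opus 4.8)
The plan is to deduce everything from a computation with the isogeny $h$ and its dual $\hat h$, after identifying $\hat J$ with $J$, $\hat E$ with $E$ and $\hat{E'}$ with $E'$ via the principal polarizations. Write $\iota = f^*\colon E\to J$ and let $\iota'\colon E'\hookrightarrow J$ be the inclusion, so that $h(x,y)=\iota(x)+\iota'(y)$. Two preliminary facts need to be recorded: under the chosen identifications $\widehat{f^*}=f_*$, and $E'=(\ker f_*)^0$ is precisely the \emph{complementary} abelian subvariety of $f^*E$ in $J$. The latter follows from $f_*f^*=[d]_E$ together with a dimension count showing that $f^*E$ and $E'$ generate $J$ and meet exactly in the finite group $f^*(E[d])$; and the defining property of complementary abelian subvarieties is exactly that $\hat{\iota'}\circ\iota\colon E\to\hat{E'}$ vanishes.

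For part (1) I would first use \eqref{eq: tower of isogenies}: since $h'\circ h=[d]_{E\times E'}$ we get $\ker h\subseteq(E\times E')[d]=E[d]\times E'[d]$, a group of order $d^4$, while $\ker h$ itself has order $d^2$ because $\deg h=d^2$. If $(x,0)\in\ker h$ then $\iota(x)=0$, hence $x=0$ because $f^*$ is injective; symmetrically $(0,y)\in\ker h$ forces $y=0$ since $\iota'$ is a closed immersion. Thus both projections of $\ker h$ onto $E[d]$ and onto $E'[d]$ are injective, so by the order count they are isomorphisms, and $\ker h$ is the graph $H_\alpha$ of the group isomorphism $\alpha:=\mathrm{pr}_{E'}\circ(\mathrm{pr}_{E}|_{\ker h})\inv\colon E[d]\to E'[d]$.

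For part (2) the key point is that, the principal polarization identifying $J$ with $\hat J$, one has $\phi_{h^*\Theta}=\hat h\circ\phi_\Theta\circ h=\hat h\circ h\colon E\times E'\to\hat E\times\hat{E'}$. In block form with respect to the two product decompositions this is a $2\times 2$ matrix of homomorphisms with diagonal entries $\hat\iota\,\iota=f_*f^*=[d]_E$ and $\hat{\iota'}\,\iota'=[d]_{E'}$ — the induced polarization on the complement $E'$ having exponent $d$ as well, equivalently because the complementary map $f'$ also has degree $d$ — and off‑diagonal entry $\hat{\iota'}\circ\iota=0$ by the remark above, whence its dual $\hat\iota\,\iota'$ vanishes too. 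Therefore $\phi_{h^*\Theta}=[d]_E\times[d]_{E'}=\phi_{d(E\times\{0\}+\{0\}\times E')}$, which gives (2) up to algebraic equivalence, and up to a translation once $\Theta$ is chosen symmetric. Finally the anti‑symplectic property of $\alpha$ drops out by descent: being the pullback of a line bundle along the isogeny $h$, the bundle $h^*\Theta$ has $\ker h$ as an isotropic subgroup for the commutator pairing $e^{h^*\Theta}$ on $K(h^*\Theta)=E[d]\times E'[d]$; for $d$ times the product of principal polarizations this pairing is $((x_1,y_1),(x_2,y_2))\mapsto e_d(x_1,x_2)\,e_d(y_1,y_2)$ with $e_d$ the Weil pairing on $E[d]$, resp. $E'[d]$ (the commutator pairing of $d$ times a principal polarization being exactly the Weil pairing). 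Evaluating on $(x_1,\alpha x_1),(x_2,\alpha x_2)\in H_\alpha=\ker h$ yields $e_d(\alpha x_1,\alpha x_2)=e_d(x_1,x_2)\inv$ for all $x_1,x_2\in E[d]$.

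I expect the only genuinely delicate point to be making the assertion ``$E'=(\ker f_*)^0$ is the complementary abelian subvariety of $f^*E$'' fully precise — i.e. checking that it satisfies the defining characterization that forces $\hat{\iota'}\circ\iota=0$, and that its induced polarization has exponent $d$; everything else is bookkeeping with dual morphisms and the standard descent formalism. Note in particular that no sign subtlety arises in the last step, because the anti‑symplectic relation is extracted from an isotropy statement rather than from a direct evaluation of the pairing.
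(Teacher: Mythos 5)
Your argument is correct, but it cannot be compared line-by-line with the paper, because the paper gives no proof of this proposition at all: it simply asserts that the construction ``has been described in \cite[\S 1]{frey-kani} for smooth curves'' and that the proof works verbatim for stable curves of compact type. What you have written is a self-contained substitute for that citation, carried out in the standard formalism of dual isogenies and descent: part (1) from the order count $|\ker h|=d^2$ inside $(E\times E')[d]$ plus injectivity of both projections; part (2) from the block decomposition of $\widehat h\circ\phi_\Theta\circ h$; and the anti-symplectic property from the fact that the kernel of an isogeny is isotropic for the commutator pairing of any line bundle pulled back along it, together with the identification of $e^{h^*\Theta}$ with the product Weil pairing (which is exactly the equivalence the paper records in its definition of anti-symplectic). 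This is in the same spirit as Frey--Kani's original argument and, as you note, never uses smoothness of $C$, only that $J(C)$ is a principally polarised abelian surface containing $f^*E$ and $E'=\ker f_*$ --- which is precisely the point the paper needs in order to extend the statement to compact type. The one step you flag as delicate, namely $\widehat{\iota'}\circ\phi_\Theta\circ\iota=0$, is in fact immediate here and does not require the general theory of complementary abelian subvarieties: since $f_*=\phi_E\inv\circ\widehat{f^*}\circ\phi_\Theta$ and $E'=\ker f_*$ is connected, one gets $\widehat{\iota}\circ\phi_\Theta\circ\iota'=0$ directly, and the vanishing you want is its dual. Two cosmetic caveats: the diagonal entry $[d]_{E'}$ is cleanest obtained from $\Theta\cdot E'=\deg f=d$ (the fibres of $f_*$ being translates of $E'$), with $\Theta\cdot f^*E=d$ then forced by $\deg\phi_{h^*\Theta}=d^4$, rather than by quoting $\deg f'=d$, which is itself part of what is being set up; and statement (2) should indeed be read as an equality of polarizations, i.e.\ up to algebraic equivalence, exactly as in Proposition \ref{prop:dJ}.
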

Notice that if $d=2$, then any  isomorphism $\alpha$  as in Proposition \ref{prop:Jd} is anti-symplectic. More generally, for a prime $d$   the number of anti-symplectic isomorphisms $E[d]\to E'[d]$ is equal to $d(d^2-1)$ (cf. \cite{frey-kani}).

The above proposition has a converse (see  \cite{frey-kani}):
\begin{prop} \label{prop:dJ}
Let $E,E'$ be elliptic curves and   let $\alpha\colon E[d]\to E'[d]$ be an anti-symplectic isomorphism.  Denote by   $H_{\alpha}$ the graph of $\alpha$; set $A:=(E\times E')/H_{\alpha}$ and denote by $h\colon E\times E'\to A$ the quotient map. 

Then 
 \begin{enumerate}
 \item $d(E\times\{0\}+\{0\}\times E')$ descends to a principal polarization $\Theta$   on $A$;
 \item let $C$ be a Theta-divisor on $A$; then $C$ is a stable  curve of genus 2 of compact type and the maps $f\colon C\to E$ and $f'\colon C\to E'$ induced by the natural maps $A\to E$ and $A\to E'$ are complementary $d$-elliptic maps. 
  \item if $d$ is odd, then there is precisely one symmetric Theta-divisor on $A$  that  is linearly equivalent to $d(E\times\{0\}+\{0\}\times E')$
\end{enumerate}
\end{prop}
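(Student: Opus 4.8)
The plan is to prove the three parts in order. Part~(1) is a descent statement and carries the real weight; parts~(2) and~(3) then follow by running the Frey--Kani construction in reverse, together with the classification of principally polarised abelian surfaces. \emph{Part~(1):} Write $M=\mathcal O_{E\times E'}(E\times\{0\}+\{0\}\times E')$ for the product principal polarisation and $L=M^{\otimes d}$. Then $K(L)=\ker(d\,\phi_M)=(E\times E')[d]$, and the commutator pairing $e^L$ on $K(L)$ coincides (up to a sign, which is irrelevant here) with the Weil pairing of the principally polarised surface $(E\times E',M)$, i.e.\ with the orthogonal sum of the Weil pairings of $E$ and $E'$. Hence the equivalent form of the anti-symplectic hypothesis recorded before Proposition~\ref{prop:Jd} says exactly that $\ker h=H_\alpha$ is an isotropic subgroup of $K(L)$; and since $|H_\alpha|=d^2=\sqrt{|K(L)|}$ it is a \emph{maximal} isotropic subgroup. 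By Mumford's descent theory the theta group $\mathcal G(L)$ then contains a lift of $H_\alpha$, so $L$ descends along $h$ to a line bundle $\Theta$ on $A$ with $h^*\Theta\cong L$. This $\Theta$ is ample (because $h$ is a finite surjection and $L$ is ample) and $\chi(\Theta)=\chi(L)/\deg h=d^2/d^2=1$, so $(A,\Theta)$ is a principally polarised abelian surface. The line bundle $\Theta$ is only well defined up to translation, but the induced polarisation is canonical, which is all that~(1) asserts. I expect this to be the main obstacle: the isotropy of $H_\alpha$ is immediate from the hypothesis, but handling Mumford's descent correctly --- the lift of $H_\alpha$ into $\mathcal G(L)$, and the fact that descending along a maximal isotropic subgroup of $K(M^{\otimes d})$ yields a principal polarisation --- is the substantive step.

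\emph{Part~(2):} Since $\Theta$ is ample with $\chi(\Theta)=1$ one has $h^0(A,\Theta)=1$, so $|\Theta|$ consists of a single divisor $C$, determined up to translation. By the classification of principally polarised abelian surfaces, $(A,\Theta)$ is either the Jacobian of a smooth genus~$2$ curve, with $C$ that curve, or a product of two elliptic curves with the product polarisation, with $C$ the union of the two factors meeting transversally in one point; in either case $C$ is a stable genus~$2$ curve of compact type and $A\cong J(C)$. Next, $\bar E:=h(E\times\{0\})\cong E$ and $\bar E':=h(\{0\}\times E')\cong E'$ are complementary elliptic subgroups of $A$ (they generate $A$ and meet in a finite set), and I take the natural maps $A\to E$ and $A\to E'$ to be the quotients by $\bar E'$, resp.\ $\bar E$, with targets identified with $E$ and $E'$ --- equivalently $\operatorname{pr}_1\circ\hat h$ and $\operatorname{pr}_2\circ\hat h$, where $\hat h\colon A\to E\times E'$ is the isogeny complementary to $h$, appearing as $h'$ in~\eqref{eq: tower of isogenies}. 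Restricting these to $C$ gives $f\colon C\to E$ and $f'\colon C\to E'$, and the projection formula computes the degrees: $\deg f=\Theta\cdot\bar E'=h^*\Theta\cdot(\{0\}\times E')=d\bigl(E\times\{0\}+\{0\}\times E'\bigr)\cdot(\{0\}\times E')=d$, and symmetrically $\deg f'=d$. The maps are surjective because $C$ generates $A$, so its image cannot lie in a fibre. Moreover $f$ does not factor through an étale cover: a factorisation $C\to\tilde E\to E$ with $\tilde E\to E$ a nontrivial isogeny would, by the universal property of the Albanese variety, extend to a factorisation of the natural map $A\to E$ through that isogeny, which is impossible since $A\to E$ has connected kernel $\bar E'$; likewise for $f'$. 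Finally, comparing the norm map $f_*\colon J(C)=A\to E$ with the natural map $A\to E$ identifies the complementary elliptic curve $(\ker f_*)^\circ$ of $f$ with $\bar E'\cong E'$ and the complementary $d$-elliptic map of Proposition~\ref{prop:Jd} with $f'$; thus $f$ and $f'$ are complementary $d$-elliptic maps. (The delicate point here is this last identification of $f_*$ with the quotient map $A\to E$, but it is bookkeeping with canonical identifications rather than a genuine difficulty.)

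\emph{Part~(3):} The line bundles $\Theta'$ on $A$ with $h^*\Theta'\cong L$ form a torsor under $\Sigma:=\ker\bigl(h^*\colon\operatorname{Pic}^0(A)\to\operatorname{Pic}^0(E\times E')\bigr)$, and $\Sigma$ is the Cartier dual of $\ker h=H_\alpha$, hence isomorphic to $(\mathbb Z/d)^2$. Since $(-1)_A$ lifts to $(-1)_{E\times E'}$, which preserves $H_\alpha$, one has $(-1)^*\Theta\cong\Theta\otimes\varepsilon_0$ for some $\varepsilon_0\in\Sigma$, and then $\Theta\otimes\eta$ is symmetric if and only if $\eta^2=\varepsilon_0$. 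When $d$ is odd, squaring is a bijection of $\Sigma$, so there is exactly one such $\eta$: a unique symmetric line bundle pulling back to $L$, whose unique effective member is the asserted symmetric Theta-divisor linearly equivalent (after pull-back to $E\times E'$) to $d(E\times\{0\}+\{0\}\times E')$.
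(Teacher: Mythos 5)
The paper offers no proof of Proposition~\ref{prop:dJ}: it simply refers the reader to \cite{frey-kani}, so there is nothing internal to compare your argument against. Your proof is correct and complete: descent of $M^{\otimes d}$ along $h$ via Mumford's theta-group criterion (isotropy of $H_\alpha$ for the commutator pairing, which is the Weil pairing here) together with $\chi(\Theta)=\chi(L)/\deg h=1$ gives (1); the classification of principally polarised abelian surfaces and the intersection-number computation $\Theta\cdot\bar E'=d$ give (2); and the torsor under $\widehat{H_\alpha}\cong(\IZ/d)^2$ with bijectivity of squaring for $d$ odd gives (3). This is essentially the standard Frey--Kani argument that the citation points to, so no discrepancy arises.
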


\subsection{Special geometry for small $d$}
It is an interesting question under what conditions the polarisation coming from the Frey-Kani construction is reducible. 

We answer this question for $d=2$.
\begin{lem}\label{lem:bi-red} Let $A$ be constructed as in  Proposition \ref{prop:dJ}   for $d=2$. Then the principal polarization $\Theta$  of $A$ is reducible if and only if there exists an isomorphism $\psi\colon E'\to E$ such that the map $ E\times E'\to E\times E$ defined by $(x,y)\mapsto (x,\psi(y))$ maps $H_{\alpha}$ to the subgroup $\Delta[2]=\{(\eta,\eta)|\eta \in E[2]\}$.

Moreover, up to isomorphism the bi-elliptic map $f$ is given by the composition
\[ C = E\times\{0\}\cup\{0\}\times E \into J(C) =  E\times E \overset{+}{\longrightarrow} E,\]
that is, it is  the identity on each component of $C$;   the complementary map $f'$ is the identity on one component and multiplication by $-1$ on the other. 
\end{lem}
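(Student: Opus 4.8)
The plan is to reduce the statement to a property of the curve $C$, then treat the two implications in turn; the explicit description of $f$ and $f'$ will come out of the argument for the ``only if'' direction. For the reduction, note that since $C$ is a Theta-divisor on $A$ (Proposition \ref{prop:dJ}) and is a stable genus $2$ curve of compact type, hence either smooth or a union of two elliptic curves meeting at a point, the polarization $\Theta$ is reducible precisely when $C$ is reducible. Moreover the condition on $\psi$ in the statement is equivalent to requiring that $\alpha\colon E[2]\to E'[2]$ be the restriction of an isomorphism $E\to E'$: indeed $(x,y)\mapsto(x,\psi(y))$ carries $H_\alpha=\{(\eta,\alpha(\eta)):\eta\in E[2]\}$ to $\Delta[2]$ exactly when $\psi\circ\alpha=\mathrm{id}$ on $E[2]$, i.e.\ when $\psi\inv$ extends $\alpha$ (and conversely, if $\alpha$ extends to $\beta$, then $\psi:=\beta\inv$ works). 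So I would prove: $C$ is reducible if and only if $\alpha$ extends to an isomorphism $E\to E'$.

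For the ``if'' direction, suppose $\alpha=\beta|_{E[2]}$ for an isomorphism $\beta\colon E\to E'$, necessarily a group isomorphism since $\beta(0)=\alpha(0)=0$. Then $\mathrm{id}_E\times\beta\inv\colon E\times E'\to E\times E$ is a group isomorphism carrying $H_\alpha$ onto $\Delta[2]$, so it descends to an isomorphism $A\cong B:=(E\times E)/\Delta[2]$; it takes $2(E\times\{0\}+\{0\}\times E')$ to $2(E\times\{0\}+\{0\}\times E)$, and these descend—by Proposition \ref{prop:dJ}(1) applied to $\alpha$ and to $\mathrm{id}\colon E[2]\to E[2]$ (anti-symplectic because $d=2$)—to the principal polarizations of $A$ and $B$, so (pull-back along an isogeny being injective on $\mathrm{NS}$ modulo torsion) the isomorphism $A\cong B$ is polarized. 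Finally, the isogeny $(x,y)\mapsto(x+y,x-y)$ of $E\times E$ has kernel exactly $\Delta[2]$, hence descends to an isomorphism $B\to E\times E$, and a short intersection-number check (the diagonal plus the antidiagonal of $E\times E$ is numerically $2(E\times\{0\}+\{0\}\times E)$) shows that it identifies the principal polarization of $B$ with the product polarization; thus $\Theta$ is reducible.

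For the ``only if'' direction and the ``Moreover'', suppose $C=E_1\cup E_2$ with $E_1\cap E_2=\{p\}$. Since $f,f'$ are finite of degree $2$ they are nonconstant on each component and, as $\mathrm{char}\,\IK\neq2$, generically \'etale; hence $f|_{E_i},f'|_{E_i}$ have degree $1$, i.e.\ are isomorphisms onto $E$, resp.\ $E'$, and in particular $E_1\cong E_2\cong E\cong E'$. I would normalise by translating so that $f(p)=0_E$, $f'(p)=0_{E'}$ and identifying $E_i=E$ via $f|_{E_i}$; then $C=(E\times\{0\})\cup(\{0\}\times E)\subset E\times E=J(C)$, the node is the origin, $f$ is the restriction of the sum map, and $\psi_i:=f'|_{E_i}$ are group isomorphisms $E\to E'$. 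Using $J(C)=\mathrm{Pic}^0(E_1)\times\mathrm{Pic}^0(E_2)$ one computes $f^*=(\mathrm{id},\mathrm{id})$, $f'^*=(\psi_1\inv,\psi_2\inv)$, hence $h(x,y)=(x+\psi_1\inv(y),\,x+\psi_2\inv(y))$; since the constructions of Propositions \ref{prop:Jd} and \ref{prop:dJ} are mutually inverse, $\ker h=H_\alpha$. Reading off both coordinates of $h(\eta,\alpha(\eta))=0$ for $\eta\in E[2]$ yields $\alpha(\eta)=\psi_1(\eta)=\psi_2(\eta)$, so $\alpha=\psi_1|_{E[2]}$ extends to the isomorphism $\psi_1$, proving ``only if''. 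Furthermore $\psi_2\psi_1\inv\in\mathrm{Aut}(E')$ is the identity on $E'[2]$, hence equals $\pm\mathrm{id}$; the value $+\mathrm{id}$ (i.e.\ $\psi_1=\psi_2$) would force the image of $h$ into the diagonal of $E\times E$, contradicting surjectivity of the isogeny $h$, so $\psi_2=-\psi_1$. Re-identifying $E'=E$ via $\psi_1$ then exhibits $f'$ as the identity on $E_1$ and $-\mathrm{id}$ on $E_2$, while $f$ is the sum map on $C$ as above—this is the ``Moreover''. The delicate part, which I would carry out with care, is precisely this last paragraph: choosing the normalisations consistently and computing $f^*$, $f'^*$ and $\ker h$ accurately enough to extract not only $\alpha=\psi_1|_{E[2]}$ but also the relation $\psi_2=-\psi_1$; the reduction, the intersection-number computation, functoriality of $\mathrm{Pic}^0$, and descent of polarizations along isogenies are all routine.
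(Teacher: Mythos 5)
Your proof is correct, but only half of it follows the paper. The ``if'' direction is essentially the paper's converse: identify $E'$ with $E$ via $\psi$, use the isogeny $q(x,y)=(x+y,x-y)$ with kernel $\Delta[2]$, and check by an intersection-number/Index Theorem computation that the descended polarization is the product one. The ``only if'' direction is genuinely different. The paper stays on the abelian surface: it writes $\Theta=B_1+B_2$, computes $B_i\bar E$ and $B_i\bar E'$ to obtain isomorphisms $E\to B_i$, identifies $\bar E'$ with the graph of an automorphism of $E$ with four fixed points, and concludes that this automorphism is $-1$. You instead work on the curve: $C=E_1\cup E_2$, the degree-$2$ maps $f$ and $f'$ restrict to isomorphisms of each component onto $E$, resp.\ $E'$, and the homomorphism $h(x,y)=f^*(x)+{f'}^*(y)$ can be written out explicitly, so that $\ker h=H_\alpha$ forces $\alpha=\psi_1|_{E[2]}=\psi_2|_{E[2]}$ and then $\psi_2=-\psi_1$; both proofs ultimately rest on the same fact, that an automorphism of an elliptic curve acting trivially on the $2$-torsion (equivalently, with four fixed points) is $\pm 1$. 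Your route has the advantage of delivering the ``Moreover'' --- the explicit form of $f$ and $f'$ on the two components --- as a byproduct, where the paper only remarks that ``the final part of the statement follows''. The one step you should spell out rather than dismiss as ``the constructions are mutually inverse'' is that the map $h=f^*+{f'}^*$ you compute coincides with the quotient map $E\times E'\to A=(E\times E')/H_\alpha$ of Proposition \ref{prop:dJ}: the natural maps $A\to E$ and $A\to E'$ of that proposition satisfy $f_*f^*=$ multiplication by $2$, which identifies $f^*$ and ${f'}^*$ with the restrictions of the quotient map to the two factors, and only after this identification is the equality $\ker h=H_\alpha$ available to you.
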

\begin{proof}
Denote by $\bar E$, $\bar E'$ the image of $E$, $E'$ in $A$:  we have $\Theta=(\bar E+\bar E')/2$ and $\bar E \bar E'=4$. Assume  that $\Theta=B_1+B_2$ is reducible, with $B_1$, $B_2$ elliptic curves such that $B_1B_2=1$ (recall that in this case $A\cong B_1\times B_2$ as ppav's). Then $1=B_1\Theta=B_1(\bar E+\bar E')/2$ implies either $B_1\bar E=B_1\bar E'=1$ or, say, $B_1 \bar E=2$, $B_1\bar E'=0$. The latter case cannot occur, since we would have $B_1=\bar E'$, hence   $2=B_1\bar E=\bar E'\bar E=4$, a contradiction. 

Hence composing the map $E\to \bar E$ with the projections  $A \to B_i$, for $i=1,2$ one obtains isomorphisms $\phi_i\colon E\to B_i$.
Hence we have an isomorphism $A\to E\times E$ that maps $\bar E$ to  the diagonal in $E\times E$ and $\bar E'$ to  the graph of an automorphism $\sigma$ of $E$. Since $\bar E\bar E'=4$,  $\sigma$ has 4 fixed points. By the classification of the possible  automorphism groups of an elliptic curve, it follows that $\sigma$ is  multiplication by $-1$ and  $\bar E'$  is mapped to the antidiagonal. Composing $E'\to A$ with the isomorphisms $A\to E\times E$ and with the first projection $E\times E\to E$ we obtain the required  isomorphism $\psi\colon E'\to E$.
Indeed it is not hard to check that the following diagram commutes:
\begin{equation}\label{diag: product reducible polarisation}
\begin{tikzcd}
E\times E' \rar{(\id, \psi)}\dar{h}& E\times E\dar{q}\\
A \rar & E\times E
\end{tikzcd},
\end{equation}
where $h$ is the quotient map and $q(x,y)=(x+y,x-y)$. 

Conversely, assume that $E=E'$ and $\alpha$ is the identity. The map $q\colon E\times E\to E\times E$ defined by $q(x,y)=(x+y,x-y)$ has kernel $H_{\alpha} = \Delta[2]$, hence $A$ is isomorphic to $E\times E$. Let $C=E\times \{0\}+\{0\}\times E$; then $q^*C=\Delta+\Delta^{-}$, where $\Delta$ is the diagonal and $\Delta^-$ is the antidiagonal. 
Since $(q^*C)^2=8$ by the pull-back formula and $q^*C(E\times \{0\}+\{0\}\times E)=4$,  the divisors $q^*C$ and $2(E\times \{0\}+\{0\}\times E)$ are algebraically equivalent by the Index Theorem, hence $C$ is the principal polarisation of Proposition \ref{prop:dJ}. (More precisely, since $q^*C$ and $2(E\times \{0\}+\{0\}\times E)$ restrict to the same divisor on $E\times \{0\}$ and $\{0\}\times E$ they are actually linearly equivalent). The final part of the statement follows.
\end{proof}

We close this section with an alternative description of bi-elliptic curves of genus $2$ of compact type, which basically stems from the fact that a double cover is the quotient by an involution.

\begin{lem}
 Let $C$ be a genus 2   stable  curve of compact type,  let $f\colon C\to E$ and $f'\colon C\to E'$ be complementary bi-elliptic maps and  let $\sigma$, resp. $\sigma'$, be  the involution induced by $f$, resp. $f'$. 
 Then the group  $\langle  \sigma, \sigma' \rangle$ is isomorphic to $  \IZ/2\times \IZ/2  $ and $ \tau:= \sigma  \sigma'  $  is the hyperelliptic involution.
\end{lem}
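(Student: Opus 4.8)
The plan is to transport everything to the Jacobian $J=J(C)$ and compute the linearisations of $\sigma$ and $\sigma'$ there, via the Frey--Kani construction (Proposition~\ref{prop:Jd}). First I would record how $\sigma$ acts on $J$. Since $f$ has degree $2$ and $\operatorname{char}\IK\neq 2$, the involution $\sigma$ is non-trivial and $C/\sigma\cong E$; by the Torelli theorem $\sigma$ induces a non-trivial involution $\sigma_J:=\sigma^{*}$ of $J$. From $f\circ\sigma=f$ one gets $\sigma_J\circ f^{*}=f^{*}$ and $f_{*}\circ\sigma_J=f_{*}$, so $\sigma_J$ fixes the elliptic subvariety $f^{*}E\subset J$ pointwise and preserves the Prym $E'=\ker(f_{*})^{0}$. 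By \eqref{eq: tower of isogenies} the subvarieties $f^{*}E$ and $E'$ meet only along $E[2]$; hence $E'\not\subset f^{*}E$, so $\sigma_J$ is not the identity on $E'$ (otherwise it would be the identity on $f^{*}E+E'=J$), and being an order $\le 2$ group automorphism of an elliptic curve, $\sigma_J|_{E'}$ must be $-1$. Consequently, in the splitting of tangent spaces $T_0J=T_0(f^{*}E)\oplus T_0(E')$ (a direct sum into distinct lines, as $f^{*}E\neq E'$) one has $d\sigma_J=\operatorname{diag}(1,-1)$; equivalently $\sigma^{*}$ has eigenvalues $1$ and $-1$ on $H^{0}(C,\omega_C)$, with $+1$-eigenspace $f^{*}H^{0}(E,\omega_E)$.

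Next I would run the identical argument for $f'$. The only extra input is that $f$ and $f'$ are mutually complementary: the Prym of $f'$ is $f^{*}E$, while $(f')^{*}E'$ is the subvariety $E'=\ker(f_{*})^{0}$ itself. Granting this, $\sigma'_J$ fixes $E'$ pointwise and acts as $-1$ on $f^{*}E$, so $d\sigma'_J=\operatorname{diag}(-1,1)$ in the same splitting of $T_0 J$.

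Everything then follows formally. The matrices $d\sigma_J$ and $d\sigma'_J$ are diagonal, hence commute, so $\sigma\sigma'=\sigma'\sigma$; they are distinct, so $\sigma\neq\sigma'$; therefore $\langle\sigma,\sigma'\rangle$ is a group of order $4$ generated by two commuting involutions, i.e.\ $\langle\sigma,\sigma'\rangle\cong\IZ/2\times\IZ/2$. Moreover $d(\sigma\sigma')_J=\operatorname{diag}(-1,-1)=-1_{J}$, so $\tau:=\sigma\sigma'$ acts as $-1$ on $H^{0}(C,\omega_C)$. Finally I would identify $\tau$ with the hyperelliptic involution: $\tau$ acts trivially on the target $\mathbb{P}^{1}$ of the canonical map of $C$, so in the smooth case $\tau$ is a non-trivial deck transformation of the degree-$2$ canonical map $C\to\mathbb{P}^{1}$, i.e.\ the hyperelliptic involution; in the reducible case $C=E_1\cup_p E_2$ the relation $\tau^{*}=-\operatorname{id}$ on $H^{0}(\omega_{E_1})\oplus H^{0}(\omega_{E_2})$ forces $\tau$ to preserve each component and to restrict to $[-1]$ (with respect to the node $p$) on each of $E_1,E_2$, which is again the hyperelliptic involution.

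The hard part is the sentence opening the second paragraph: checking cleanly that taking complements is an involution on the set of bi-elliptic maps of $C$, so that $f$ and $f'$ give back the same unordered pair of orthogonal elliptic subvarieties of $J$ with the roles of ``subcover'' and ``Prym'' interchanged, while keeping track of the self-duality identifications built into \eqref{eq: tower of isogenies}; concretely this can be verified by hand just as in the proof of Lemma~\ref{lem:bi-red}. A minor point must also be settled at the outset in the reducible case: a finite morphism of degree $2$ from $E_1\cup_p E_2$ to an elliptic curve is an isomorphism on each component (neither component can be contracted, by finiteness), so $\sigma$ and $\sigma'$ necessarily interchange the two components of $C$ and the whole statement is genuinely about automorphisms of the reducible stable curve.
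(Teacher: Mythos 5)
Your argument is correct and follows essentially the same route as the paper: both proofs reduce to the observation that, under the Frey--Kani splitting of $J(C)$ by the subvarieties $f^*E$ and $E'=\ker(f_*)^0$, the involutions $\sigma$ and $\sigma'$ act as $(+1,-1)$ and $(-1,+1)$ respectively, so that $\tau=\sigma\sigma'$ acts as $-1$ on $J(C)$ and is therefore the hyperelliptic involution (multiplication by $-1$ on each component in the reducible case). The only differences are minor: the paper simply asserts that $\sigma$ and $\sigma'$ are induced by $(x,y)\mapsto(x,-y)$ and $(x,y)\mapsto(-x,y)$ on $E\times E'$ (the verification you supply, including the symmetry of complementarity that you rightly flag), and it deduces commutativity from the centrality of the hyperelliptic involution in $\mathrm{Aut}(C)$ rather than from the simultaneous diagonal action --- for the latter, in positive characteristic you should invoke the pointwise statements you already established on $f^*E$ and $E'$ rather than the equality of differentials.
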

\begin{proof}  Let $J$ be the Jacobian of $C$.  The involution $\sigma$ on $J$ is induced by the  involution $(x,y)\mapsto (x,-y)$ of $E\times E'$ (cf. \eqref{eq: tower of isogenies}) and, similarly, $\sigma'$ is induced by $(x,y)\mapsto (-x,y)$. So $\tau=\sigma\sigma'$ acts as multiplication by $-1$ on $J$ and therefore, if $C$ is smooth, is the hyperelliptic involution. If $C$ is reducible, then $\tau$ is multiplication by $-1$ on both components of $C$. 

Since $\tau$ is in the center of $\Aut(C)$, $\sigma$ and $\sigma'$ commute and $\langle  \sigma, \sigma' \rangle$ has order 4.
\end{proof}
 
It is well known that bidouble covers can be reconstructed from their building data (cf. \cite[\S 2]{pardini91});  we explain this in the case at hand.

  Choose points  $P_1,P_2$ and  distinct points $Q_1,Q_2,Q_3\in \pp^1$ that are also distinct from $P_1$ and $P_2$ ($P_1$ and $P_2$ are allowed to  coincide). Let $\pi\colon C\to \pp^1$ be the bidouble cover branched on $D_1=P_2$, $D_2=P_1$ and $D_3=Q_1+Q_2+Q_3$,  denote by $G$ the Galois group  of $\pi$ and  by $\sigma\in G$
  (resp. $\sigma '$ and $\tau$)  the involution  the fixes the preimage of $D_1 $ (resp. $D_2$,  $D_3$). 
  Assume first that $P_1\ne P_2$; in this case $C$ is a smooth curve of genus 2 and for $i=1,2$ the quotients   $E=C/\sigma $ and  $E'=C/\sigma'$ are  smooth curves of genus $1$. The involution $\tau$ has $6$ fixed points and therefore  is the hyperelliptic involution. 

If $P_1=P_2$, then $C$ has a node over $P_1=P_2$ and the normalization is the bidouble cover of $\pp^1$ with branch divisors $D_1=D_2=0$ and $D_3=P_1+Q_1+Q_2+Q_3$. So $C$ is reducible and has  two components, both isomorphic to the double cover of  $\pp^1$ branched on $P_1+Q_1+Q_2+Q_3$.

This construction  is related to the construction given in Proposition  \ref{prop:dJ}   as follows. 

Let $\pi\colon C\to \pp^1$ be as above, with $C$ smooth,  and 
 take the preimage of $P_1$ as the origin  $0\in E$ and the preimage of $P_2$  as the origin  $0'\in E'$. Denote by $A_1, A_2, A_3$ (resp. by $B_1, B_2, B_3$) the preimages of $Q_1,Q_2,Q_3$ in $E$ (resp.  in $E'$). 
Then the nonzero elements of $E[2]$ (resp. $E'[2]$)  are $\eta_i:=A_i-0$ (resp. $\eta'_i:=B_i-0'$), $i=1,2,3$); we define $\alpha\colon E[2]\to E'[2]$ as the isomorphism that maps $\eta_i\to \eta'_i$. 

We claim that the bi-elliptic structure on $C$ is obtained via  the Frey-Kani construction with the above choice of   $\alpha$, i.e.,  the 
  kernel of the  pull-back map $h'^{\ast}\colon E\times E'\to J:=J(C)$ is the graph $H_{\alpha}$ of $\alpha$. 

Indeed, since the kernel $\Gamma$  of the pull-back map has order 4, it is enough to show that $H_{\alpha}$ is in contained in $\Gamma$. In addition one has $f^*A_i={f'}^*B_i$ for $i=1,2,3$, hence we only need to show that  $f^*0$ and ${f'}^*0'$ are linearly equivalent. The divisor $f^*0$ is the ramification divisor of $f'$, hence $f^*0\equiv K_C$; the same argument shows that ${f'}^*0'\equiv K_C$ and we are done. 

The case $C$ reducible is obvious.

\begin{rem}
 For tri-elliptic curves one can apply the general theory of triple covers \cite{miranda85} to deduce the following result \cite[Lem.~2.8]{FPR16b}: a stable curve $C$ of genus $2$ admits a tri-elliptic map  $C\to E$ such that $C$ embedds into the symmetric square of $E$ as a tri-section of the Albanese map $S^2E \to E$. 
 
 Note however that a tri-elliptic map $C\to E$ cannot be a cyclic cover, since by the Hurwitz formula it would be ramified over precisely one point and this is impossible, for instance, by   \cite[Prop. 2.1]{pardini91}).  So there is no elementary description of $C$ just in terms of the ramification divisor. 
\end{rem}

\section{$(p,d)$-elliptic curves of genus 2}\label{section:(p,d)}
We consider stable curves of genus 2 admitting two distinct maps to elliptic curves.

\subsection{$(n,d)$-elliptic curves and configurations}

\begin{defin}
Let $C$ be a stable curve of genus $2$. 
An $(n,d)$-elliptic configuration $(C,f,g)$  is a diagram
\begin{equation}\label{diag: pd-elliptic}
  \begin{tikzcd}
   {} &C \arrow{dl}{n:1}[swap]{f}\arrow{dr}{g}[swap]{d:1}\\
   E && D
  \end{tikzcd},
 \end{equation}
 where $f$ is an $n$-elliptic map and $g$ is a $d$-elliptic map such that  there is no isomorphism  $\psi \colon E\to D$ such that $g=\psi\circ f$. 
 We refer to $C$  as to an $(n,d)$-elliptic curve (of genus 2).
\end{defin}
An isomorphism of $(n,d)$-elliptic configurations is an isomorphism of diagrams like \eqref{diag: pd-elliptic}.

\begin{lem}\label{lem:compact type} If  $C$ is $(n ,d)$-elliptic stable curve of genus 2, then it is of compact type. 
\end{lem}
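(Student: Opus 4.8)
The plan is to rule out, one by one, the stable genus-two curves that are not of compact type. Recall that for a nodal curve one has $p_a(C)=\sum_i g(\wt C_i)+b_1(\Gamma)$, where the $\wt C_i$ are the normalisations of the irreducible components and $b_1(\Gamma)$ is the first Betti number of the dual graph, and that $C$ is of compact type precisely when $b_1(\Gamma)=0$. So assume $C$ is \emph{not} of compact type, i.e. $b_1(\Gamma)\ge 1$. Since $f$ is finite it contracts no component, so each $\wt C_i\to E$ is a finite cover of an elliptic curve and hence $g(\wt C_i)\ge 1$ by Riemann--Hurwitz. Thus $1\le b_1(\Gamma)=2-\sum_i g(\wt C_i)\le 2-r$, with $r$ the number of components; this forces $r=1$, $g(\wt C_1)=1$ and $b_1(\Gamma)=1$. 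In other words $C$ is irreducible with a single node $p$, its normalisation $\nu\colon\wt C\to C$ is a smooth curve of genus $1$, and $\nu\inv(p)=\{q_1,q_2\}$ with $q_1\ne q_2$.

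The next step is to use both maps at once. Put $\wt f:=f\circ\nu\colon\wt C\to E$ and $\wt g:=g\circ\nu\colon\wt C\to D$; as above these are finite separable covers of elliptic curves from a genus-one curve (the characteristic divides neither $n$ nor $d$), hence étale by Riemann--Hurwitz, so after fixing an origin on $\wt C$ and translating on $E$ and $D$ we may assume $\wt f$ and $\wt g$ are isogenies. Since $\nu$ glues $q_1$ to $q_2$ we have $\wt f(q_1)=\wt f(q_2)$ and $\wt g(q_1)=\wt g(q_2)$, so $t:=q_1-q_2$ is a nonzero element of $\ker\wt f\cap\ker\wt g$. Let $\pi\colon\wt C\to E_0:=\wt C/\langle t\rangle$ be the quotient isogeny. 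Both $\wt f$ and $\wt g$ factor through $\pi$, say $\wt f=\bar f\circ\pi$ and $\wt g=\bar g\circ\pi$ with $\bar f\colon E_0\to E$, $\bar g\colon E_0\to D$ isogenies; and since $\pi(q_1)=\pi(q_2)$ and $C$ is obtained from $\wt C$ by gluing $q_1$ to $q_2$, the morphism $\pi$ descends uniquely along $\nu$ to some $\bar\pi\colon C\to E_0$ with $\pi=\bar\pi\circ\nu$. As $\nu$ is surjective this yields $f=\bar f\circ\bar\pi$ and $g=\bar g\circ\bar\pi$.

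To finish: $\bar f\colon E_0\to E$ is an étale cover of $E$, and $f$ factors through it; since $f$ is an $n$-elliptic map it does not factor through a non-trivial étale cover of $E$, so $\bar f$ is an isomorphism, and in the same way $\bar g$ is an isomorphism. Then $\psi:=\bar g\circ\bar f\inv\colon E\to D$ is an isomorphism with $\psi\circ f=\bar g\circ\bar f\inv\circ\bar f\circ\bar\pi=\bar g\circ\bar\pi=g$, contradicting the hypothesis that $(C,f,g)$ is an $(n,d)$-elliptic configuration; hence $C$ is of compact type. I expect the delicate point to be the construction of the simultaneous factorisation $f=\bar f\circ\bar\pi$, $g=\bar g\circ\bar\pi$: one must notice both that the node forces the two preimages $q_1,q_2$ to have equal image under \emph{each} of $\wt f,\wt g$, so a single torsion point $t$ serves both maps, and that morphisms out of the node $p$ correspond to morphisms out of $\{q_1,q_2\}\subset\wt C$ taking equal values there, so that $E_0=\wt C/\langle t\rangle$ genuinely receives a morphism from $C$; granting this, $n$-ellipticity of $f$ and of $g$ trivialises the two residual étale covers simultaneously and produces the forbidden isomorphism $\psi$.
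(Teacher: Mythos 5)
Your proof is correct and follows essentially the same route as the paper's: reduce to the case of an irreducible curve with one node, pass to the normalisation where both maps become isogenies, observe that the difference of the two points over the node lies in both kernels, and use the non-factoring-through-\'etale-covers clause of the definition to produce the forbidden isomorphism $E\to D$. Your endgame is in fact stated a bit more cleanly than the paper's (which asserts $n=d=2$ where the argument really gives $n=d=\mathrm{ord}(t)$ and the isomorphism), but the underlying idea is identical.
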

\begin{proof}
As usual, we let $f\colon C\to E$ and $g\colon C\to D$ be the $n$-elliptic and $d$-elliptic maps.

The curve $C$ cannot have rational components since it has a finite map onto an  elliptic curve, hence it is enough to rule out  the possibility that $C$ is irreducible with  one node. 
Assume by contradiction that this is the case, let $C^{\nu}\to C$ be the normalization map and  denote by $\psi_1\colon C^{\nu}\to E$ (resp.  $\psi_2\colon C^{\nu}\to D$) 
the map of degree $n$ (resp. $d$)    induced by  $f$ (resp. $g$).  
We denote by $O,P\in C^{\nu}$ the points of  that map to the node of $C$ and we take $O$ as the origin; with a suitable choice of the origins in  $E$ and $D$  we can assume that $\psi_1$ and $\psi_2$ are isogenies.

  Since $\psi_1$ and $\psi_2$ factor through $C^{\nu}\to C$, $P$ belongs to $\ker \psi_1\cap \ker\psi_2$ and for $i=1,2$ $\psi_i$  factors through the \'etale covers $C^{\nu}/\!<P>\to E$ and $C^{\nu}/\!<P>\to D$. It follows that $f$ and $g$ also factor through $C^{\nu}/\!<P>\to E$ and $C^{\nu}/\!<P>\to D$ hence,  
   by the definition of   $d$-elliptic curve,  it follows that $n =d=2$ and the two bi-elliptic structures differ by an isomorphism $E\to D$, a contradiction. \end{proof}

\begin{rem}\label{rem:d-d}
By the Frey-Kani construction given in  \S \ref{sec:d-ell}, if $C$ is of compact type and has a $d$-elliptic map $f\colon C\to E$ then, if  we denote  $f'\colon C\to E'$  complementary  $d$-elliptic map, $(C, f, f')$ is a  $(d,d)$-elliptic configuration. We refer to this as to  the \emph{trivial $(d,d)$-elliptic configuration}. 
\end{rem}

\subsection{$(p,d)$-elliptic curves}\label{ssec: pd}

Assume we are given an $(n,d)$-elliptic configuration as in \eqref{diag: pd-elliptic}, which is non-trivial in the sense of Remark \ref{rem:d-d}. Then both elliptic maps factor,
up to isomorphism, through the Abel-Jacobi map of $C$
 and are thus uniquely determined by the subgroups $\ker f_*$ and  $\ker g_*$.

 To analyse such a configuration,  we apply the Frey-Kani construction to the $n$-elliptic map  given by $f$ and then  we consider $\bar F = \ker g_*$.  We can   extend \eqref{eq: tower of isogenies} to a diagram 
\begin{equation}\label{eq: diagrammone}
 \begin{tikzcd}
  F\dar[dashed][swap]{(\phi, \phi')}\rar[dashed]{h_F}& \bar F\dar\arrow{drr}{m:1} \\
  E\times E' \rar{h} &  J(C) \rar{f_*\times f'_*}\dar{g_*} & E\times E' \rar & E\\
   & D & 
 \end{tikzcd},
\end{equation}
where $\bar F \Theta = d$    ($\Theta$ denoting the principal polarization) and $F$ is the connected component of $h\inv F$ containing the origin.
Indeed  we have the following

\begin{rem}\label{rem:key} 
A  genus 2 curve of compact type has an $n$-elliptic structure if and only if its Jacobian $J$ contains a connected $1$-dimensional subgroup $\bar{E}'$ such that $\bar{E}' \Theta=n$.
 Therefore an  $n$-elliptic curve  
 $C$ has an $(n,d)$-elliptic structure if and only if $J$ contains a  second  connected 1-dimensional subgroup $\bar F$ such that $\bar F \Theta=d$ and $ \bar F \ne \bar{E}'$. 
 So, if  we exclude the trivial $(d,d)$-structures (cf. Remark \ref{rem:d-d}), the Jacobian of an  $(n,d)$-elliptic curve of genus 2 contains at least three, hence infinitely many, connected $1$-dimensional subgroups. In particular  the curve $C$ has infinitely many elliptic structures, and  the curves  $E$ and $E'$   are isogenous.
 \end{rem}

For a given      $(n,d)$-elliptic configuration  $(C,f,g)$ we denote  $\bar F = \ker g_*$ and  $\bar{E}'= \ker f_*$ and 
we define the \emph{twisting number}  of  $(C,f,g)$ as 
\begin{equation}\label{eq: m} m = m(C, f, g) := \bar F \ker f_*=  \bar F \bar{E}'  = \deg (\bar F\to E ) = n^2\frac{\deg \phi}{\deg h_F}
\end{equation}   
where $\phi$ and $h_F$ are as in \eqref{eq: diagrammone}. 
\begin{rem} \label{rem:trivial} 
One has $m>0$, by the definition of $(n,d)$-elliptic curve. 

Denote by $\bar E$ the kernel of $f'_*\colon J\to E'$, where $f'$ is the complementary map of $f$. Then by the Frey-Kani construction we have $\Theta= \frac{\bar E+ \bar E'}{n}$, hence $nd=n\bar F\Theta=m+\bar F\bar E$. It follows that $m\le nd$, with equality holding if and only if $\bar F\bar E=0$, namely iff  we are in the trivial case $g=f'$.
\end{rem} 
                                                                 
We   first provide three examples with $m <nd$  that fit into this general pattern and then prove that when $n=p$ is a prime these cover all possibilities for non trivial elliptic configurations.

\begin{exam}\label{ex:1}
Let  $n,d$ be integers. 
Let $F$ be an elliptic curve and let $\phi\colon F\to E$ and $\phi' \colon F\to E'$ be isogenies such that: 
\begin{itemize}
\item[--]$\ker \phi\cap \ker \phi'=\{0\}$, hence  $\phi\times\phi'\colon F\to E\times E'$ is injective; 
\item[--] $\deg\phi+\deg \phi'=nd$, and $n$  and $\deg\phi$ are coprime.
\end{itemize}

We abuse notation and denote again by $F$ the image of $\phi \times \phi'$. 
The subgroup $H:=F[n]\subset (E\times E')[n]$ satisfies  $H\cap E=H\cap E'=\{0\}$,  since $EF=\deg \phi'$ and $E'F=\deg\phi$ are  coprime to  $n$. Hence $H$ is the graph of an isomorphism $E[n]\to E'[n]$. 
The polarization $n(E\times\{0\}+\{0\}\times E')$ restricts on $F$ to a divisor of degree $n^2d$, which therefore is a pull back via the map $F\to F$ defined by multiplication by $n$. 
By the functorial properties  of the  
 Weil pairing (see  statement (1) of  \cite[Ch.IV,~\S23, p.228]{Mumford_Abelian})  it follows   that $F[n]$ is an isotropic subspace of $(E\times E')[n]$. 

Let $A =(E\times E')/H$ and let   $\Theta$ be the principal polarization of $A$ (see   Proposition \ref{prop:dJ}). 
  Denote by  $\bar F$ be the image of $F$ in  $A$: 
 then  we have $n^2\bar F\Theta=nF(E\times \{0\}+\{0\}\times E')=n^2d$, namely $\bar F\Theta=d$.  By Remark \ref{rem:key}   
we obtain an $(n,d)$-elliptic configuration
with twisting number $m = n^2\frac{\deg \phi}{\deg h_F}= \deg \phi$ (cf.   \eqref{eq: m}). 
 \end{exam} 
 
\hfill\break
From now on we will focus on the case where $n=p$ is a prime number. 

\begin{exam}\label{ex:p}
Let  $p,d$ be integers and 
assume that $p$ is a prime. Let $F$ be an elliptic curve and let   $\phi\colon F\to E$ and $\phi' \colon F\to E'$ be isogenies such that: 
\begin{itemize}
\item[--] $\ker \phi\cap \ker \phi'=\{0\}$;
\item[--] $\deg\phi+\deg \phi'=d$;
\item[--]  $F[p]\not\subset \ker \phi$ and $F[p]\not \subset \ker \phi'$.  
\end{itemize}

Under the above conditions,  it is possible to  find  an antisymplectic isomorphism $\alpha\colon E[p]\to E'[p]$ such that  $H_{\alpha}\cap F$ has order p, where $H_{\alpha}$ is the graph of   $\alpha$.  This follows  because   by 
our assumptions there 
exists  $0\ne v\in F[p]$ such that $v\notin \ker\phi \cup \ker \phi'$. 
Moreover,
 since  the Weil pairing of a product is given by the product of the Weil pairings  (see   statement (2) of  \cite[Ch.IV,~\S23, p.228]{Mumford_Abelian}), the annihilator $W$ of $v$ in $(E\times E')[p]$ does not contain $E[p]\times \{0\}$ nor   $\{0\}\times E'[p]$. 
 The linear subspace  $W$ is three dimensional, hence $\pp(W)$ is a projective plane over $\IF_p$.
 Now consider in $\pp(W)$  the pencil  $\mathcal F$ of lines through $[v]$: since $\mathcal F$  consists of $p+1$ lines,   if $p>2$ there is at least a line $l \in  \mathcal F$  that does not intersect the lines $r:=\pp(E[p]\times \{0\})$ and $s:=\pp(\{0\}\times E'[p])$ and distinct from $t:=\pp(F[p])$. 
 The subspace of $(E\times E')[p]$  corresponding to $l$ is the graph of an isomorphism $\alpha\colon E[p]\to E'[p]$ and is also isotropic, hence $\alpha$ is anti-symplectic.
 For $p=2$, any isomorphism $\alpha$ is antisymplectic, hence it is enough to find a line in $\pp((E\times E')[2])$ that contains $[v]$,  which is distinct from $t$  and does not intersect $r$ and $s$.  An elementary geometrical  argument shows that there exist two lines with this property. 

Therefore  we can consider $ A:=(E\times E')/H_{\alpha}$ and  the principal polarization $\Theta$  of $A$ (cf.   Proposition \ref{prop:dJ}). 
 Again we denote by $\bar F$ the image of $F$ in $ A$, obtaining 
 $p^2\bar F\Theta=pF(p(E\times \{0\}+\{0\}\times E'))=p^2d$, namely $\bar F\Theta=d$,  
 i.e. by Remark \ref{rem:key}    we get a  $(p,d)$-elliptic configuration. In this case by  \eqref{eq: m}  we have  $m = p^2\frac{\deg \phi}{\deg h_F}=  p \deg \phi$.  
\end{exam}

\begin{exam}\label{ex:p2}
Let  $p,d$ be integers such that   
 $p$ is a prime  and  $d$ is divisible by $p$.  Write $d=p\delta$ and 
let $F$ be an elliptic curve with   $\phi\colon F\to E$ and $\phi' \colon F\to E'$  isogenies such that:
\begin{itemize}
\item[--] $\ker \phi\cap \ker \phi'=\{0\}$;
\item[--] $\deg\phi+\deg \phi'=\delta$;
\end{itemize}
We look for an antisymplectic isomorphism $\alpha\colon E[p]\to E'[p]$ such that $H_{\alpha}\cap F=\{0\}$, $H_{\alpha}$ being  the graph of $\alpha$. 

To see that such $\alpha$ exists we  argue as follows.  
As in Example \ref{ex:p} we  identify $2$-dimensional  subspaces of $(E\times E')[p]$ with lines in $\pp^3({\mathbb F}_p):=\pp((E\times E')[p])$. We have seen in Example \ref{ex:p} that there are $p+1$ isotropic lines through any  point, hence there  exist  $(p+1)(p^2+1)$ isotropic lines.

The isotropic lines meeting a given line   are  $p(p+1)+1$  or  $(p+1)^2$, according to whether the line is isotropic or not. Set  $r:=\pp(E[p]\times \{0\})$,   $s:=\pp(\{0\}\times E'[p])$ and  $t=\pp(F[p])$; note that $r$ and $s$ are not   isotropic. 
Hence there are at most $3(p+1)^2$ isotropic lines meeting $r\cup s\cup t$.  However, all the lines joining a point of $r$ and a point of $s$ are isotropic hence, by  subtracting these lines (that we had counted twice) we get the  better 
upper estimate   $ 2(p+1)^2$ for the number of isotropic lines   meeting $r\cup s\cup t$.  For $p\ge 3$ this shows the existence of the isotropic subspace $H_{\alpha}$ that we are looking for, since $(p+1)(p^2+1)-2(p+1)^2=(p+1)(p^2-2p-1)>0$.

For $p=2$, we need only find a line that is disjoint from $r\cup t \cup s$. We observe that $\pp^3({\mathbb F}_2)$ contains 35 lines,  that the lines intersecting a given line are 19,  that the lines meeting two given skew lines are 9 and the lines meeting three mutually skew lines are 3.

 If  $\deg\phi$ and $\deg \phi'$ are odd,  then the  three lines $r$,  $s$ and $t$ are mutually skew: then the set of lines meeting at least one of these consists of  $3\cdot 19- 3\cdot 9+3= 33$ lines, hence there are 2 possibilities for $H_{\alpha}$. 
 
 Now assume that both  $\deg \phi$ and $\deg\phi'$ are even. In this case $t$ meets both $r$ and $s$. The number of lines intersecting $r\cup t$ is equal to $7+7-3=11$, since there are 7 lines in  plane spanned by $r$ and $t$, there are  7 lines passing through $r\cap t$, and 3 lines common to these two sets. An analogous argument shows that there are $3+3-1=5$ lines meeting $t$, $r$ and $s$.
 So the number of lines intersecting $r\cup t \cup s$ is equal to $3\cdot 19-9-2\cdot 11+5=31$, so there are 4 possibilities for $H_{\alpha}$.

Finally we consider the case where $\deg \phi$ is even and $\deg\phi'$ is odd. There are two possibilities: either $r=t$ or $r$ and $t$ are coplanar but distinct. 
In the former case, the  number of lines  meeting $r\cup t \cup s=r\cup s$ is equal to $2\cdot 19-9=29$, so there are 6  possibilities for $H_{\alpha}$.
In the latter case,  the  number of lines  meeting $r\cup t \cup s$ is equal to $3\cdot 19-2\cdot 9-11+5=33$, so there are 2  possibilities for $H_{\alpha}$.

Taking $ A:=(E\times E')/H_{\alpha}$ and  $\Theta$   the principal polarization  of $A$ (cf.   Proposition \ref{prop:dJ}) and 
 denoting by $\bar F$ the image of $F$ in $ A$, we get 
 $p^2\bar F\Theta=p^2F(p(E\times \{0\}+\{0\}\times E'))=p^2d$, namely $\bar F\Theta=d$,  
 i.e. by Remark \ref{rem:key}  
we get a  $(p,d)$-elliptic configuration. In this case    \eqref{eq: m} yields  $m = p^2\frac{\deg \phi}{\deg h_F}=  p^2 \deg \phi$.   
\end{exam}

\begin{thm}\label{thm:2d}
Let $p$ be a prime and let $d$ be a positive integer. Let  $C$  be a stable curve of genus 2  and let $(C,f,g)$ be  a  non-trivial (cf. Remark \ref{rem:d-d}) $(p,d)$-elliptic configuration 
 \[  \begin{tikzcd}
   {} &C \arrow{dl}{p:1}[swap]{f}\arrow{dr}{g}[swap]{d:1}\\
   E && D
  \end{tikzcd},
\]
Denote by $\bar E'$ (resp. $\bar F$)  the kernel of  $f_*\colon J=J(C)\to E$  (resp. $g_*\colon J\to D$) and let $m=\bar E'\bar F$ be the twisting number as in  \eqref{eq: m}.
 Then
\begin{enumerate}
\item  the $(p,d)$-elliptic configuration  arises as in Example \ref{ex:1}, or \ref{ex:p}, or \ref{ex:p2}, and thus $1\leq m\leq pd -1$;
\item the case of Example \ref{ex:p2}  can occur  only if $p$ divides  $d$ and  $p^2$ divides $m$;
\item the case of  Example \ref{ex:1} occurs if and only if $m$ is not divisible by $p$. 
\end{enumerate}
\end{thm}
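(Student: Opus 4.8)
The strategy is to start from the configuration $(C,f,g)$, apply the Frey--Kani construction to $f$ to obtain the complementary map $f'\colon C\to E'$ and the tower \eqref{eq: tower of isogenies}, so that $J=J(C)=(E\times E')/H_\alpha$ for an anti-symplectic isomorphism $\alpha\colon E[p]\to E'[p]$, and then to analyse the preimage $h^{-1}(\bar F)$ in $E\times E'$, where $h\colon E\times E'\to J$ is the quotient. Let $F$ be the connected component of the origin in $h^{-1}(\bar F)$, an elliptic curve, and let $h_F\colon F\to\bar F$ be the restriction of $h$; then $F$ comes with the two projections to $E$ and $E'$, which I will call $\phi\colon F\to E$ and $\phi'\colon F\to E'$. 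The first task is to show $\ker\phi\cap\ker\phi'=\{0\}$: a point in this intersection is a torsion point of $E\times E'$ lying on $F$ and in $E\cap E'=$ nothing inside $E\times E'$ except $0$ — more precisely, $\phi\times\phi'$ is literally the inclusion $F\hookrightarrow E\times E'$ followed by the two projections, so a common kernel point would be $(0,0)$. Next I compute $\deg\phi+\deg\phi'$ in terms of intersection numbers: pulling the polarization $p(E\times\{0\}+\{0\}\times E')$ back to $F$ gives a divisor of degree $p\bigl(F\cdot E + F\cdot E'\bigr)=p(\deg\phi'+\deg\phi)$, and on the other hand it equals (via $h$) the pullback of $\Theta$ from $\bar F$, which has degree $\deg h_F\cdot(\bar F\cdot\Theta)\cdot$(a factor from the polarization), yielding a relation that pins down $\deg\phi+\deg\phi'$ as $d$, $pd$, or $\delta=d/p$ according to $\deg h_F\in\{p^2,p,1\}$.

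\textbf{Key steps in order.} First, set up $J=(E\times E')/H_\alpha$, the map $h$ with $\ker h=H_\alpha\cong E[p]$, and $F\subset E\times E'$ as above, with $h_F\colon F\to\bar F$. Second, observe $\ker h_F=F\cap H_\alpha$, a subgroup of $E[p]\cong(\IZ/p)^2$, so $\deg h_F\in\{1,p,p^2\}$. Third, use the twisting-number formula $m=p^2\deg\phi/\deg h_F$ from \eqref{eq: m} together with the estimate $1\le m\le pd-1$ from Remark \ref{rem:trivial} (strict because the configuration is non-trivial). Fourth, compute $\deg\phi+\deg\phi'$ by comparing $h_F^*\Theta$ with $(\phi\times\phi')^*\bigl(p(E\times\{0\}+\{0\}\times E')\bigr)$, using $\bar F\cdot\Theta=d$; this gives $p(\deg\phi+\deg\phi')=\deg h_F\cdot d\cdot(\text{something})$, which I expect to reduce cleanly to $\deg\phi+\deg\phi'=pd/\deg h_F$ after tracking the polarization carefully — so $=d$ if $\deg h_F=p$, $=\delta$ if $\deg h_F=p^2$ (forcing $p\mid d$), and $=pd$ if $\deg h_F=1$; but $\deg\phi+\deg\phi'=pd$ together with $\ker\phi\cap\ker\phi'=0$ and $m=p^2\deg\phi$ would force $m$ to be too large unless we are in a degenerate situation, so I must rule out $\deg h_F=1$ unless it reproduces Example \ref{ex:1} with its coprimality condition. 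Fifth, in each case reconstruct the hypotheses of the corresponding example: for $\deg h_F=p$ the condition $F[p]\not\subset\ker\phi$, $F[p]\not\subset\ker\phi'$ follows because $F\cap H_\alpha$ has order $p$ and $H_\alpha$ meets neither $E[p]\times\{0\}$ nor $\{0\}\times E'[p]$ (as $\alpha$ is an isomorphism), giving Example \ref{ex:p}; for $\deg h_F=p^2$ we get $F[p]=F\cap H_\alpha\subset H_\alpha$, $H_\alpha$ is the graph of $\alpha$ on all of $E[p]$, so this is exactly Example \ref{ex:p2}; and $\deg h_F=1$, i.e. $F\cap H_\alpha=\{0\}$, is Example \ref{ex:1}, where $H=F[p]$ being a graph forces $\deg\phi$ coprime to $p$.

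\textbf{Proving (2) and (3).} For (2): Example \ref{ex:p2} corresponds precisely to $\deg h_F=p^2$, which by the degree relation forces $p\mid d$, and then $m=p^2\deg\phi/\deg h_F\cdot\deg h_F=\dots$ — wait, more carefully $m=p^2\deg\phi/\deg h_F=\deg\phi$ when $\deg h_F=p^2$? No: re-examining \eqref{eq: m}, $m=p^2\deg\phi/\deg h_F$, so $\deg h_F=p^2$ gives $m=\deg\phi$, which need not be divisible by $p^2$ — so I must instead read off $m$ from the example's own computation, where Example \ref{ex:p2} records $m=p^2\deg\phi$; reconciling these means in the classification the roles are matched so that the $F$ and $\phi$ produced are the example's $F$ and $\phi$ after possibly an isogeny, and the cleanest route is: in case $\deg h_F=p^2$, show directly that $p^2\mid m$ by noting $\bar F$ factors through multiplication by $p$ appropriately, or equivalently that $F\to E$ has degree divisible by $p^2$ once we use $F[p]\subset H_\alpha$. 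For (3): $m$ not divisible by $p$ $\iff$ $\deg\phi$ not divisible by $p$ (since in Examples \ref{ex:p},\ref{ex:p2} $m=p\deg\phi$ or $p^2\deg\phi$, always divisible by $p$, while in Example \ref{ex:1} $m=\deg\phi$ with $\gcd(\deg\phi,p)=1$) $\iff$ $\deg h_F=1$ $\iff$ Example \ref{ex:1}; the forward direction uses that $\deg h_F\ge p$ makes $p\mid m$, the reverse that $\deg h_F=1$ is Example \ref{ex:1}.

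\textbf{Main obstacle.} The delicate point will be step four: correctly bookkeeping the polarization factors when pulling back $\Theta$ along $h_F$ versus pulling back $p(E\times\{0\}+\{0\}\times E')$ along $\phi\times\phi'$, and in particular making sure the two computations of $m$ (from \eqref{eq: m} as $p^2\deg\phi/\deg h_F$ and from the examples as $\deg h_F\cdot\deg\phi$ up to the factor matching $\deg h_F\in\{1,p,p^2\}$ to $m/\deg\phi\in\{1,p,p^2\}$) are consistent — this forces the identification $\deg h_F\cdot(m/\deg\phi)=p^2$, i.e. $m=p^2\deg\phi/\deg h_F$, which is exactly \eqref{eq: m}, so consistency holds, but I must present it without circularity. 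Ruling out spurious values of $\deg h_F$ (e.g. $\deg h_F=p$ but with $F[p]\subset\ker\phi$, which would contradict $F\cap H_\alpha$ having order $p$ and $H_\alpha$ being transverse to the coordinate $p$-torsion) is routine once the transversality of $H_\alpha=\operatorname{graph}(\alpha)$ to $E[p]\times\{0\}$ and $\{0\}\times E'[p]$ is invoked.
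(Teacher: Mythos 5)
Your overall strategy coincides with the paper's: apply the Frey--Kani construction to $f$, let $F$ be the connected component of the origin in $h^{-1}(\bar F)$, and run a trichotomy on $\deg h_F=|H_\alpha\cap F|\in\{1,p,p^2\}$. However, there is a genuine error in the degree bookkeeping which swaps two of the three cases. Pulling $\Theta$ back along $h_F$ gives a divisor of degree $\deg h_F\cdot(\bar F\cdot\Theta)=d\,\deg h_F$ on $F$, while restricting $h^*\Theta\equiv p(E\times\{0\}+\{0\}\times E')$ to $F$ gives degree $p(\deg\phi+\deg\phi')$; hence $\deg\phi+\deg\phi'=d\,\deg h_F/p$, not $pd/\deg h_F$ as you assert. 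Consequently $\deg h_F=p^2$ (equivalently $H_\alpha=F[p]$) gives $\deg\phi+\deg\phi'=pd$ and is Example \ref{ex:1} --- indeed that example takes $H:=F[n]$ --- whereas $\deg h_F=1$ (i.e.\ $H_\alpha\cap F=\{0\}$) gives $\deg\phi+\deg\phi'=d/p$ and is Example \ref{ex:p2}. You have these two reversed, and your description of the case $\deg h_F=1$ is internally contradictory: you write ``$F\cap H_\alpha=\{0\}$ \dots\ where $H=F[p]$''.

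The mis-identification propagates into parts (2) and (3). In the case $H_\alpha=F[p]$ the twisting number is $m=p^2\deg\phi/\deg h_F=\deg\phi$; since $H_\alpha$ is the graph of an isomorphism it meets $\{0\}\times E'[p]$ only in $0$, so $\ker\phi=F\cap(\{0\}\times E')$ contains no $p$-torsion and $\deg\phi$ is \emph{prime} to $p$ (likewise $\deg\phi'=pd-m$). Your proposed fix --- to ``show that $F\to E$ has degree divisible by $p^2$ once we use $F[p]\subset H_\alpha$'' --- asserts the opposite and is false. The correct resolution of the tension you noticed is that Example \ref{ex:p2} lives in the case $\deg h_F=1$, where \eqref{eq: m} gives $m=p^2\deg\phi$ directly, so $p^2\mid m$ and $p\mid d$ with no further argument, and the case $\deg h_F=p^2$ is Example \ref{ex:1}, whose coprimality hypothesis is exactly the statement above; this also yields part (3), since $p\nmid m$ forces $\deg h_F=p^2$ and conversely. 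With the two extreme cases relabelled and the degree relation corrected, the remainder of your outline agrees with the paper's proof.
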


\begin{proof} By   Remark \ref{rem:trivial} we have $0<m\le pd$, and $m=pd$  holds only in the trivial case $g=f'$.
Therefore    by our assumptions it is  $1\le m\le pd-1$. 

We use freely the notation of \S \ref{ssec: pd} and diagram \ref{eq: diagrammone} and   we denote by $\phi\colon F\to E$ and $\phi'\colon F\to E'$ the isogenies induced by the two projections of $E\times E'$. Note that $\ker\phi\cap\ker \phi'=\{0\}$ by construction. 
The pull-back $h^*\bar F\subset E\times E'$ is algebraically equivalent to $\nu F$ for some integer $\nu\in\{1,p,p^2\}$ (one has $p^2=\nu|H\cap F|$). We have $p^2m=p^2\bar F\bar E'=\nu F(p^2(\{0\}\times E'))$, i.e., $m =\nu F(\{0\}\times E')=\nu\deg\phi$. In the same way, one obtains $pd-m=\nu F(E\times \{0\})=\nu \deg \phi'$.  In particular,  $\nu=1$ if $m$ is not divisible by $p$.

Consider the case $\nu=1$, i.e, $H=F[p]$. In this case  the map $E\times E'\to J(C)$ induces a degree $p^2$  isogeny $F\to \bar F\cong F$, the degree of $\phi$ is equal to $m$ and the degree of $\phi'$ is equal to $pd-m$.  
Since $H$, being a graph, intersects $E\times \{0\}$ and $\{0\}\times E'$ only in $0$, it follows that  $m$, which  is equal to the order of  $(\{0\}\times E')\cap F$,  is prime to $p$, and the same is true for  $\deg \phi'=pd-m$.
So, $C$ is constructed as in Example \ref{ex:1}.

Next, assume that $\nu=p$, i.e. $H\cap F$ has order $p$.  In this case, one has $m=p\deg \phi$ and  $\deg\phi+\deg \phi'=d$. Since $H\cap F$ has order $p$ and $H$ is a graph, it follows that $F[p]\not\subset \ker \phi$ and $F[p]\not \subset \ker \phi'$, hence  $C$ is constructed as in Example \ref{ex:p}.

Finally consider the case $\nu=p^2$.  In this case one has $m=p^2\deg \phi$ and  $d=p(\deg\phi+\deg \phi')$,  hence  $C$ is constructed as in Example \ref{ex:p2}.
\end{proof}

\subsection{Existence of smooth $(n,d)$-elliptic curves}\label{sec:existence} 

 First of all let us recall that  
 an irreducible $(n,d)$-elliptic curve is smooth by Lemma \ref{lem:compact type}.

By Lemma \ref{lem:bi-red},  for $n=2$ a necessary condition for the irreducibility of the genus 2 curve $C$ constructed as in Proposition \ref{prop:dJ} is that the curves $E$ and $E'$ are isomorphic, hence if $E$ does not have complex multiplication then the constructions of Examples \ref{ex:1}, \ref{ex:p} and \ref{ex:p2} yield examples of smooth $(2,d)$-elliptic curves of genus 2 for every $d>2$. 

In general,  it is not clear  whether the constructions of Examples \ref{ex:1}, \ref{ex:p} and \ref{ex:p2} give rise to irreducible, hence smooth,  curves. 
We are able to settle this point at least in a special case:
\begin{prop} \label{prop:pd-irred}
Let $n\ge 2,  d\ge 3 $ be integers; let $E$ be an elliptic  curve without complex multiplication, $\xi\in E$ an element of order $r:=dn-1$, and $\phi'\colon E\to E':=E/<\xi>$ the quotient map. 

Then the $(n,d)$-elliptic genus 2 curve  constructed as in Example \ref{ex:1} with   $F=E$, $\phi={\rm Id}_{E}$ and $\phi'$ as above  is smooth.
\end{prop}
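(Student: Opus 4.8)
The plan is to show that the principally polarised abelian surface $(A,\Theta)$ produced by Example \ref{ex:1} with the given data ($F=E$, $\phi=\mathrm{Id}_E$, $\phi'\colon E\to E'=E/\langle\xi\rangle$) is not isomorphic as a ppav to a product of two elliptic curves. Since the curve $C$ is a Theta-divisor on $(A,\Theta)$, this is equivalent to $C$ being irreducible, and then $C$ is smooth, an irreducible $(n,d)$-elliptic curve of genus $2$ being smooth by Lemma \ref{lem:compact type}. Arguing as in the proof of Lemma \ref{lem:bi-red}: if $\Theta$ were reducible, say $\Theta\equiv B_1+B_2$ with $B_1,B_2$ elliptic curves and $B_1\cdot B_2=1$, then $B_1\cdot\Theta=B_1^2+B_1\cdot B_2=1$. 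Hence it suffices to show that $A$ contains no elliptic curve $B$ with $B\cdot\Theta=1$, and I argue by contradiction, assuming that $B$ is such a curve.

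Let $h\colon E\times E'\to A=(E\times E')/H$ be the quotient isogeny, with $H=F[n]$ and $\deg h=n^2$, and recall $h^{\ast}\Theta\equiv nL_0$ where $L_0=(E\times\{0\})+(\{0\}\times E')$ (Proposition \ref{prop:dJ}). Let $\tilde B\subseteq E\times E'$ be the connected component of $h^{-1}(B)$ through the origin, an elliptic curve. Since $h$ is \'etale and $h^{-1}(B)=\tilde B+H$, one has $h^{\ast}B\equiv\frac{n^2}{|H\cap\tilde B|}\,\tilde B$ as divisors, so the projection formula gives
\[
  1=B\cdot\Theta=\frac{n\,(\tilde B\cdot L_0)}{|H\cap\tilde B|},
\]
that is $|H\cap\tilde B|=n\,(\tilde B\cdot L_0)$. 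I now classify the elliptic curves in $E\times E'$. Because $E$ has no complex multiplication and $\ker\phi'=\langle\xi\rangle$ is cyclic, $\phi'$ generates the rank-one group $\mathrm{Hom}(E,E')$: if $\phi'=c\psi$ with $\psi$ a generator and $|c|\geq 2$, then $E[|c|]\subseteq\ker\phi'$ would be a noncyclic subgroup of the cyclic group $\langle\xi\rangle$. Consequently, apart from $E\times\{0\}$ and $\{0\}\times E'$ --- which meet $H$ only in the origin (as $\gcd(n,dn-1)=1$) and already violate the last equality --- every elliptic curve $\tilde B\subseteq E\times E'$ is the image of a homomorphism $([a],c\phi')\colon E\to E\times E'$ for some integers $a=\deg(\tilde B\to E)\geq 1$ and $c\neq 0$ with $\gcd(a,c)=1$; moreover $a\mid r:=dn-1$ and $\tilde B\cdot L_0=a+b$, where $b=\deg(\tilde B\to E')=c^2r/a\geq 1$.

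The crucial point is the computation of $|H\cap\tilde B|$. A point $(y,\phi'(y))$ with $y\in E[n]$ lies on $\tilde B$ exactly when $(c-a)x\in\langle\xi\rangle$ for some $x$ with $ax=y$; the set of such $x$ is a coset of $E[a]$, and since $\gcd(c-a,a)=1$ this becomes $(c-a)x_0\in\langle\xi\rangle+E[a]$ for one, hence every, choice of $x_0$. As $x_0\in E[na]$, and $E[na]\cap(\langle\xi\rangle+E[a])=E[a]$ by virtue of $a\mid r$ and $\gcd(n,r)=1$, the condition reduces to $(c-a)x_0\in E[a]$, i.e.\ to $(c-a)y=0$. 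Hence $H\cap\tilde B\cong E[n]\cap E[c-a]=E[\gcd(n,c-a)]$, so $|H\cap\tilde B|=\gcd(n,c-a)^2$. I expect this torsion bookkeeping to be the main obstacle, being the step that uses simultaneously the absence of complex multiplication, the cyclicity of $\ker\phi'$, and the coprimality $\gcd(n,dn-1)=1$.

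It remains to rule out the equation $\gcd(n,c-a)^2=n\bigl(a+c^2r/a\bigr)$. Put $\delta=\gcd(n,c-a)$ and $q=n/\delta$; from $\delta\mid n$ and $n\mid\delta^2$ we obtain $q\mid\delta$, hence $\delta=q(a+b)$ and $n=q^2(a+b)$. Substituting $r=dn-1$ in the identity $a(a+b)=a^2+c^2r$ gives $(a+b)(a-c^2q^2d)=(a-c)(a+c)$; since $\delta\mid(c-a)$, a fortiori $(a+b)\mid(a-c)$, so dividing by $a+b$ and using $\gcd(q,a)=1$ (which follows from $a\mid dn-1$) forces $q\mid a$, whence $q=1$. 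Thus $a+b=n$ and $n\mid(c-a)$, so $c^2=a(n-a)/r$ with $a\equiv c\pmod n$ and $1\leq a\leq n-1$. Since $c^2=a(n-a)/r<n$ (here $d\geq 3$ enters), $a$ equals $|c|$ or $n-|c|$, so $a(n-a)=|c|(n-|c|)$, and the equation collapses to $|c|(dn-1)=n-|c|$, i.e.\ $|c|\,d=1$ --- impossible. This contradiction shows that no such $B$ exists, so $(A,\Theta)$ is not a product and $C$ is smooth.
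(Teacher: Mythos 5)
Your overall strategy coincides with the paper's: reduce smoothness to the non-existence of an elliptic curve $B\subset A$ with $B\cdot\Theta=1$, pull back along $h$, classify the connected $1$-dimensional subgroups of $E\times E'$ using the absence of complex multiplication, and derive a numerical contradiction. The reduction, the identity $|H\cap\tilde B|=n\,(\tilde B\cdot L_0)$, and the fact that $\mathrm{Hom}(E,E')=\IZ\phi'$ are all correct.

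The gap is in your classification of the curves $\tilde B$. Writing $\tilde B$ as the image of $([a],c\phi')$ with $\gcd(a,c)=1$ is correct (and the pair $(a,c)$ is determined by $\tilde B$ up to sign), but the kernel of $E\to\tilde B$ is the cyclic subgroup of $\langle\xi\rangle$ of order $\delta:=\gcd(a,r)$, so that
\[
\deg(\tilde B\to E)=\frac{a^2}{\delta},\qquad \deg(\tilde B\to E')=\frac{c^2r}{\delta}.
\]
Your assertions $a=\deg(\tilde B\to E)$, $a\mid r$ and $\deg(\tilde B\to E')=c^2r/a$ hold only when $\delta=a$. For instance with $n=2$, $d=3$ (so $r=5$) the subgroup $\{(2x,\phi'(x))\}$ has projection degrees $4$ and $5$ and is covered by no pair with $a\mid r$; since there is no freedom to renormalise $(a,c)$, every $\tilde B$ with $\gcd(a,r)<a$ is simply absent from your case analysis. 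Your torsion computation $|H\cap\tilde B|=\gcd(n,c-a)^2$ explicitly invokes $a\mid r$, and the closing arithmetic rests on the identity $a(a+b)=a^2+c^2r$, i.e.\ again on $b=c^2r/a$; neither survives when $\delta<a$. The paper's proof carries $\delta=\gcd(a_i,r)$ through all of its intersection formulas (see \eqref{eq:intersections}) and needs the divisibility and inequality arguments of its Steps 3--5 precisely to dispose of the case $\delta<a$. To close your argument you would have to redo the computation of $|H\cap\tilde B|$ and the final Diophantine analysis with $\delta$ in place of $a$.
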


As an immediate consequence we obtain:

\begin{cor}
For every pair of integers $n, d>1$ there exists a smooth $(n,d)$-elliptic curve of genus 2 with twisting number $m=1$.
\end{cor}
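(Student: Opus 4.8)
The plan is to deduce the corollary from Proposition~\ref{prop:pd-irred} by combining it with the evident symmetry $n\leftrightarrow d$ of the situation, and then to take care by hand of the single pair $n=d=2$ that lies outside the hypotheses of that proposition. For $d\ge 3$ there is essentially nothing new: since $\operatorname{char}\IK$ does not divide $r:=dn-1$, for any elliptic curve $E$ one has $E[r]\cong(\IZ/r)^2$, which contains an element $\xi$ of exact order $r$; choosing $E$ without complex multiplication (such $E$ exist, e.g.\ in characteristic $0$) and applying Proposition~\ref{prop:pd-irred} with $F=E$, $\phi=\operatorname{Id}_E$ and $\phi'\colon E\to E':=E/\langle\xi\rangle$ produces a smooth $(n,d)$-elliptic genus $2$ curve $C$, whose twisting number is $m=\deg\phi=1$ by the computation of $m$ in Example~\ref{ex:1} (cf.\ \eqref{eq: m}).

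Next I would record that the twisting number is symmetric under interchanging the two elliptic maps: if $(C,f,g)$ is an $(n,d)$-elliptic configuration then $(C,g,f)$ is a $(d,n)$-elliptic configuration on the very same curve, and since $m=\ker(g_*)\cdot\ker(f_*)$ is an intersection number on the abelian surface $J(C)$, it is unchanged by the swap. Hence a smooth $(n,d)$-elliptic curve with $m=1$ is literally the same datum as a smooth $(d,n)$-elliptic curve with $m=1$, and the case $d=2$, $n\ge 3$ follows from the previous paragraph applied to $(d,n)$ in place of $(n,d)$.

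There remains only $n=d=2$. Here I would apply Example~\ref{ex:1} directly with $n=d=2$, $F=E$, $\phi=\operatorname{Id}_E$ and $\phi'\colon E\to E':=E/\langle\xi\rangle$, where $\xi\in E$ has order $r=nd-1=3$: the hypotheses of the example hold ($\ker\phi\cap\ker\phi'=\{0\}$, $\deg\phi+\deg\phi'=1+3=nd$, and $n=2$ is coprime to $\deg\phi=1$), so one obtains a $(2,2)$-elliptic configuration with $J(C)=A=(E\times E')/H_{\alpha}$ and twisting number $m=\deg\phi=1$, which is non-trivial in the sense of Remark~\ref{rem:d-d} because $1<nd-1=3$. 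The curve $C$ is smooth exactly when the principal polarization $\Theta$ of $A$ is irreducible; by Lemma~\ref{lem:bi-red}, were $\Theta$ reducible there would exist an isomorphism $E'\cong E$, and hence an isogeny $E\to E$ of degree $\deg\phi'=3$, which is impossible because $E$ has no complex multiplication, so $\operatorname{End}(E)=\IZ$ and every isogeny $E\to E$ is multiplication by an integer and has square degree. Thus $\Theta$ is irreducible and $C$ is smooth. The only delicate point is precisely this last case $n=d=2$, which the proposition does not cover; once one invokes the bi-elliptic reducibility criterion of Lemma~\ref{lem:bi-red} even it is routine, so I do not anticipate any real obstacle beyond this bookkeeping.
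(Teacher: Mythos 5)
Your proof is correct and follows essentially the same route as the paper, which likewise invokes Proposition~\ref{prop:pd-irred} for the cases with $\max(n,d)\ge 3$ and settles $n=d=2$ via Example~\ref{ex:1} together with the reducibility criterion of Lemma~\ref{lem:bi-red}. You merely make explicit two points the paper leaves implicit: the symmetry $m(C,f,g)=m(C,g,f)$ needed to cover $(n,2)$ with $n\ge 3$, and the observation that $E\not\cong E'=E/\langle\xi\rangle$ because a non-CM elliptic curve admits no endomorphism of non-square degree.
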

\begin{proof}
For  $n=d=2$ the claim follows by Lemma \ref{lem:bi-red}, for instance by using the construction of Example \ref{ex:1},    and by Proposition \ref{prop:pd-irred} in  the remaining cases.
\end{proof}
\begin{proof}[Proof of Proposition \ref{prop:pd-irred}] 
Denote by $\Xi$ the product polarization on $E\times E'$. Set $H:=\{(\eta,\phi'(\eta))|\eta\in E[p]\}$  and let $h\colon E\times E'\to A:=(E\times E')/H$ be the quotient map. 

We argue  by contradiction, so assume that the principal polarization  of $A$ induced by $n\Xi$ is reducible and denote it by $C=C_1+C_2$. Up to a translation we may assume that  the singular point of $C$ is the origin of  $A$. Let  $\wt C_i$ be  the connected component  of the preimage of $C_i$ containing the origin of $E\times E'$, $i=1,2$, so that $h^*C_i$ is numerically equivalent to $\nu_i \wt C_i$ for a positive  integer $\nu_i$. One has 
\begin{equation}\label{eq:n}
n^2=\nu_i|H\cap \wt C_i|\  \text{ and }\ \frac{n}{\nu_i}=\wt C_i\Xi\in \IZ.
\end{equation}

Since $E$ does not have complex multiplication ($\End E  = \IZ$), the connected  $1$-dimensional subgroups of $E\times E$ distinct from $E\times\{0\}$ and $\{0\}\times E$ are of the form $\{(ax,bx)\  |\  x\in E\}$, with $a, b$ coprime integers. This is well known, but we give a quick proof for lack of a suitable reference. 
Let $G$ be such a subgroup, and denote by $\psi_i\colon G\to E$, $i=1,2$ the isogenies induced by the two projections. Note that $\ker\psi_1\cap \ker \psi_2=\{0\}$. If $G$ is isomorphic to $E$, then the $\psi_i$ are multiplication maps and $G$ is of the form $D_{a,b}$ for some pair of coprime integers $a,b$.
So assume that $G$ and $E$ are not isomorphic and consider an isogeny $\chi\colon E\to G$. Since $\chi$ is not a multiplication map, there exists an integer $k$ and elements $u, v\in E[k]$ such that $\chi(u)=0$ and $\chi(v)=v'\ne 0$. Now consider the maps $\mu_i:=\psi_i\circ \chi\colon E\to E$, which are multiplication maps by  integer $t_i$, $i=1,2$. Both $t_1$ and $t_2$ are divisible by $k$, since for $i=1,2$ we have $\mu_i(u)=0$, hence $\psi_i(v')=\mu_i(v)=0$  and so $v'=0$, a contradiction. 

 Since $D_{a,b}=D_{-a,-b}$, we  may always assume $a\geq 0$. 
 It follows that the connected  $1$-dimensional subgroups of $E\times E'$ distinct from $E\times\{0\}$ and $\{0\}\times E'$ are of the form  $D_{a,b}=\{(ax,b\phi'(x))\ |\ x\in E\}$.
Notice that the kernel of the induced  map $E\to D_{a,b}$ is the cyclic subgroup of $<\xi>$ of order $\delta:=g.c.d.(a,r)$.
Using this observation one computes:
\begin{equation}\label{eq:intersections}
D_{a,b}(\{0\}\times E')=\frac{a^2}{\delta}, \ \ D_{a,b}( E\times \{0\})=\frac{b^2r}{\delta},  \ \  D_{a,b} D_{1,1}=(b-a)^2\frac{r}{\delta}.
\end{equation}
For $i=1,2$, let $a_i, b_i\in \IZ$ be such that $\wt C_i=D_{a_i,b_i}$,  with  $a_i\ge0$; set $\delta_i=g.c.d.(a_i,r)$.  We will now derive a contradiction using intersection numbers.

\begin{description}
 \item[Step 1] 
 We have $a_i>0$. Indeed  if $a_i=0$ we have  $\wt C_i \Xi=1$ and  $ |H\cap \wt C_i| =1$, so \eqref{eq:n} gives $n=\nu_i$ and $\nu_i=n^2$,    against our assumptions. 
\item[Step 2] 
  We show $(a_i, b_i)\neq (1,1)$. Indeed, assuming $\tilde C_i = D_{1,1} $ \eqref{eq:n} gives
 \[\frac{n}{\nu_i} = \tilde C_i\Xi =  D_{1,1}\Xi = 1+r = dn,\]
 which is impossible since $d>1$. In particular, since  $a_i$ and $b_i$ are coprime, we have $a_i\neq b_i$. 
\item[Step 3] From the above steps  we derive  two inequalities and a divisibility property which will lead to a contradiction.
 
 First of all we
 have, for $i=1,2$,
 \begin{equation}\label{eq: divisibility}
  n | \nu_i (a_i - b_i),
 \end{equation} 
 Indeed, since   $D_{1,1}\cap  \wt C_i$ is a subgroup containing $H\cap \wt C_i$ we have that 
$\wt C_i D_{1,1}=(b_i-a_i)^2\frac{r}{\delta_i}$ is divisible by $\frac{n^2}{\nu_i}$, hence $(b_i-a_i)^2$ is divisible by $\frac{n^2}{\nu_i}$, since $\frac{r}{\delta_i}$ is an integer prime to $n$. So  $\nu_i^2(b_i-a_i)^2$ is divisible by $n^2$, and therefore  $\nu_i(a_i-b_i)$ is divisible by $n$. 
 
Secondly, by \eqref{eq:intersections} we have $n=(\nu_1\wt C_1+\nu_2\wt C_2)(\{0\}\times E')=\nu_1a_1\frac{a_1}{\delta_1}+\nu_2a_2\frac{a_2}{\delta_2}$ and $n=(\nu_1\wt C_1+\nu_2\wt C_2)(E\times \{0\})=\nu_1b_1^2\frac{r}{\delta_1}+\nu_2b_2^2\frac{r}{\delta_2}$. In particular, we have
 \begin{equation}\label{eq:ineq}
 \nu_1 a_1+\nu_2a_2\le n, \qquad d(\nu_1b_1^2+\nu_2 b_2^2)\le n, 
  \end{equation}
  since $\frac{r}{\delta_i}$ is an integer and $\frac{r}{\delta_i}\ge d\frac{n}{a_i}-1>d-1$.
 \item[Step 4] We cannot have $b_i>0$. Indeed in this case, since $0\le \nu_i a_i, \nu_i b_i <n$ by \eqref{eq:ineq} and $n$ divides the difference  $\nu_i a_i- \nu_i b_i$ by \eqref{eq: divisibility}, then  we necessarily have $a_i = b_i$ contradicting Step 2.
 \item[Step 5] We cannot have $b_i \leq 0$. Indeed the same argument as in the previous step shows that we would necessarily have $\nu_ib_i=\nu_ia_i-n$ for $i=1,2$.  By \eqref{eq:ineq} we may assume, say, $\nu_1a_1\le \frac{n}{2}$ and thus by the above equality $\nu_1|b_1|=-\nu_1b_1\ge \frac{n}{2}$. Then \eqref{eq:ineq} gives:
$$n\ge d\nu_1b_1^2 \ge  | b_1|  \frac{dn}{2},$$
 a contradiction since $d>2$. 
\end{description}
Combining the last two steps we arrive at a contradiction and have thus proved that that the polarisation is irreducible and hence is  a smooth $(n,d)$-elliptic curve of genus $2$.
\end{proof}

\section{bi-tri-elliptic curves}\label{section:bi-tri}

For the applications to the classification of Gorenstein stable Godeaux surfaces the case of bi-tri-elliptic configurations is of particular interest. In this section we first formulate Theorem \ref{thm:2d} in  this case and then analyse reducible bi-tri-elliptic curves in more detail.

Indeed we have the following characterization of  reducible bi-tri-elliptic configurations. 
\begin{cor}\label{cor:bi/tri} Let $(C, f,g)$ 
 be a bi-tri-elliptic configuration on a stable curve of arithmetic genus $2$. Then the twisting number $m$ defined in \eqref{eq: m} satisfies $1\le m\le 5$ and there are the following possibilities:
\begin{itemize}
\item[(a)]  $m$ is  odd and  the configuration arises as in Example \ref{ex:1} with $\deg\phi=m$;
\item [(b)]$m=2\mu$ is even and  the configuration  arises as in Example \ref{ex:p} with $\deg\phi=\mu$.
\end{itemize}
\end{cor}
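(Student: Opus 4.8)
The plan is to specialize Theorem~\ref{thm:2d} to the case $p=2$, $d=3$ and simply read off what the three possible shapes of the configuration become. Since $p=2$ is prime and $d=3$, part (1) of Theorem~\ref{thm:2d} tells us immediately that $(C,f,g)$ arises as in Example~\ref{ex:1}, or Example~\ref{ex:p}, or Example~\ref{ex:p2}, and that the twisting number satisfies $1\le m\le pd-1=5$. So the only work is to eliminate Example~\ref{ex:p2} and to match up the two surviving cases with the parity of $m$, which is exactly what parts (2) and (3) of the theorem do.

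First I would rule out Example~\ref{ex:p2}. By part (2) of Theorem~\ref{thm:2d}, that case requires $p\mid d$; but here $p=2$ does not divide $d=3$, so Example~\ref{ex:p2} cannot occur. Hence $(C,f,g)$ arises as in Example~\ref{ex:1} or Example~\ref{ex:p}. Next I would split according to the parity of $m$. By part (3) of Theorem~\ref{thm:2d}, the case of Example~\ref{ex:1} occurs if and only if $p\nmid m$, i.e.\ (for $p=2$) if and only if $m$ is odd; in that case, tracing through Example~\ref{ex:1}, the twisting number equals $\deg\phi$ (cf.\ the formula $m=\deg\phi$ at the end of Example~\ref{ex:1}), giving possibility (a) with $\deg\phi=m$. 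If instead $m$ is even, then Example~\ref{ex:1} is excluded by part (3) and Example~\ref{ex:p2} by part (2), so we must be in the case of Example~\ref{ex:p}; there the relation recorded at the end of Example~\ref{ex:p} is $m=p\deg\phi=2\deg\phi$, so writing $m=2\mu$ we get $\deg\phi=\mu$, which is possibility (b).

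Finally I would note that the bound $1\le m\le 5$ is already contained in the conclusion $1\le m\le pd-1$ of Theorem~\ref{thm:2d}(1) with $pd=6$, and that the two possibilities (a) and (b) are exhaustive precisely because the odd/even dichotomy for $m$ covers everything and each branch forces exactly one of the two surviving examples. There is essentially no obstacle here: the statement is a direct corollary, and the only thing one must be slightly careful about is quoting the correct relation between $m$ and $\deg\phi$ in each of Examples~\ref{ex:1} and~\ref{ex:p}, since these differ by the factor $p$. In short, this is a bookkeeping argument; all the content lives in Theorem~\ref{thm:2d}.
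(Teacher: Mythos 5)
Your proposal is correct and is exactly the intended argument: the paper offers no separate proof of Corollary \ref{cor:bi/tri}, presenting it as the immediate specialization of Theorem \ref{thm:2d} to $p=2$, $d=3$, which is precisely what you carry out (including the correct reading of $m=\deg\phi$ in Example \ref{ex:1} versus $m=p\deg\phi$ in Example \ref{ex:p}, and the exclusion of Example \ref{ex:p2} via $2\nmid 3$). The only point you leave implicit is that a bi-tri-elliptic configuration is automatically non-trivial in the sense of Remark \ref{rem:d-d} because $2\neq 3$, so Theorem \ref{thm:2d} indeed applies.
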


\begin{rem}
 Counting parameters we see that the space of bi-tri-elliptic configurations is one-dimensional, but we did not consider its finer structure, e.g., the number of irreducible or connected components.
\end{rem}
 
Now assume that 
 $C\cong E \cup_0 E$,  where $E$ is  an elliptic curve with a  degree $2$ endomorphism $\psi\colon E\to E$.
 Then we can build a  {  natural}  bi-tri-elliptic  configuration 
\begin{equation}\label{diagram:bi-tri-configuration}
 \begin{tikzcd}{} &E \cup_0 E\arrow{dl}{2:1}[swap]{f=\id\cup\id}\arrow{dr}{g=\id\cup \psi}[swap]{3:1}\\
   E && E
  \end{tikzcd}.
 \end{equation}
We will now show that every bi-tri-elliptic configuration $(C, f, g)$ with $C$ reducible is of this form. Indeed, by Lemma \ref{lem:bi-red} the bi-elliptic map $f$  on the reducible curve $C$ is isomorphic to the composition of horizontal arrows in the diagram 
 \[
 \begin{tikzcd}
 {}& \bar F\dar\\
 C = E\times\{0\}\cup\{0\}\times E \arrow{dr}{g}\arrow[hookrightarrow]{r} &   E\times E \rar{+}\dar & E\\
  & D
 \end{tikzcd},
\]
and the tri-elliptic map is uniquely determined by the subgroup $\bar F$. Note that the covering involution of $f$ exchanges the components of $C$. 

We have $\bar F C =3$ and without changing $f$ we can assume that $\bar F(\{0\}\times E)=1$ and $\bar F( E\times\{0\})=2$. In other words, $\bar F$ is the graph of a  degree $2$ endomorphism $\psi\colon E\to E$ and $E\times\{0\}$ is identified with the second elliptic curve $D$ by the restriction of $g$. 
Therefore the bi-tri-elliptic configurations  is as in \eqref{diagram:bi-tri-configuration}.

An isomorphism from $(C, f,g)$ to another bi-tri-elliptic configuration $(\tilde C = \tilde E \cup_0\tilde E, \tilde f, \tilde g)$ such that $\tilde f$ is the identity on each component of $\tilde C$ is uniquely determined by an isomorphism $E\isom \tilde  E$ and thus we have proved the first part of the following 
\begin{prop}
 The above construction induces a bijection on the set of iso\-morphism-classes of bi-tri-elliptic configurations $(C, f,g)$ with $C$ a reducible stable curve of genus $2$ and the set $\{(E, \psi)\}$ of elliptic curves together with an endomorphism of degree $2$. 

For every $1\leq m\leq 5$ there are exactly two such pairs $(E,\psi)$, which are listed in Table \ref{tab: endos}, thus in total there are 10 isomorphism classes of bi-tri-elliptic configurations with $C$ a reducible stable curve of  genus $2$. 
\end{prop}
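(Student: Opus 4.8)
The plan is to establish the bijection first and then count the pairs $(E,\psi)$ degree by degree using the twisting number $m$ computed in Corollary~\ref{cor:bi/tri}. For the bijection, the discussion preceding the proposition already shows that every reducible bi-tri-elliptic configuration is isomorphic to one of the form \eqref{diagram:bi-tri-configuration}, and that an isomorphism of configurations respecting the chosen splitting is the same thing as an isomorphism $E\isom\tilde E$; so the content that remains is to verify that two pairs $(E,\psi)$ and $(\tilde E,\tilde\psi)$ give isomorphic configurations if and only if there is an isomorphism $\theta\colon E\to\tilde E$ with $\tilde\psi=\theta\psi\theta\inv$. One direction is clear by transport of structure. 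For the other, one uses that the bi-elliptic map $f$ on a reducible curve is, up to isomorphism, rigidly the ``identity on each component'' model of Lemma~\ref{lem:bi-red}, so any isomorphism of configurations can be normalized to respect the splitting; then the tri-elliptic map, being determined by $\bar F = $ graph of $\psi$, forces the conjugacy relation. Thus isomorphism classes of reducible bi-tri-elliptic configurations correspond to conjugacy classes of degree-$2$ endomorphisms of elliptic curves.

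Next I would translate ``degree-$2$ endomorphism of an elliptic curve'' into arithmetic. A nonzero endomorphism $\psi$ of $E$ with $\deg\psi=2$ forces $E$ to have complex multiplication, so $\End(E)$ is an order $\mathcal O$ in an imaginary quadratic field, and $\psi$ is an element of $\mathcal O$ of norm $2$. Conversely, for a given order $\mathcal O$ and a given element of norm $2$, and a given $E$ with $\End(E)\cong\mathcal O$, we get such a $\psi$; conjugacy of $\psi$ within a fixed $E$ amounts to the action of $\Aut(E)$ (i.e. units of $\mathcal O$) and of complex conjugation (passing between $\psi$ and its dual $\hat\psi$), and different $E$'s with the same $\End$-order are distinguished by the ideal class. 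So the enumeration reduces to: for each imaginary quadratic order $\mathcal O$ containing an element of norm $2$, list the norm-$2$ elements up to units and conjugation, and multiply by the class number. The orders with an element of norm $2$ are exactly those containing $\sqrt{-2}$, $\frac{1\pm\sqrt{-7}}{2}$, or $1+i$ (i.e. lying in $\IQ(\sqrt{-2})$, $\IQ(\sqrt{-7})$, or $\IQ(i)$), and only finitely many of these have the relevant element actually in the order.

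The remaining step, and the one requiring the most care, is to match this arithmetic count against the twisting-number stratification of Corollary~\ref{cor:bi/tri} and produce Table~\ref{tab: endos} with exactly two pairs for each $m\in\{1,\dots,5\}$. Here the key computation is to express $m$ in terms of $\psi$: tracing through diagram \eqref{diagram:bi-tri-configuration} and the definition \eqref{eq: m}, with $\bar F$ the graph of $\psi$ inside $E\times E\cong J(C)$, the twisting number $m=\bar F\bar E'$ should come out as a simple expression in $\psi$ — essentially governed by $\deg(1\pm\psi)=\deg\psi+1\mp\operatorname{tr}\psi$, so that the trace of $\psi$ (equivalently, which norm-$2$ element of which order $\psi$ is) determines $m$. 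One then checks that as $\psi$ ranges over the finitely many conjugacy classes of norm-$2$ elements across the three fields, the value of $m$ ranges over $\{1,2,3,4,5\}$, each attained exactly twice; the two solutions for each $m$ typically being $\psi$ and its dual (or $\psi$ over two different curves in the same field). I expect the bookkeeping of units, conjugation, and class numbers — making sure nothing is double-counted and that the split between cases (a) and (b) of Corollary~\ref{cor:bi/tri} according to the parity of $m$ is respected — to be the main obstacle, together with verifying that each abstract $\psi$ genuinely yields a \emph{reducible} (not accidentally disallowed) configuration. Once the table is assembled and the count $5\times 2=10$ is read off, the proof is complete.
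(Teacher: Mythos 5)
Your overall strategy coincides with the paper's: take the bijection from the discussion preceding the proposition, translate degree-$2$ endomorphisms into norm-$2$ elements of imaginary quadratic orders, and read $m$ off from the trace of $\psi$. However, the equivalence relation you impose on the pairs $(E,\psi)$ is wrong, and this is exactly where the count ``two per $m$, ten in total'' lives. Since $\End(E)$ is commutative, conjugation by $\Aut(E)$ acts \emph{trivially} on endomorphisms, and complex conjugation $\xi\mapsto\bar\xi$ is antiholomorphic, hence not induced by any automorphism of $E$; so neither ``units'' nor ``passing to the dual $\hat\psi$'' identifies configurations. Quotienting by duality would cut the count to $5$, and quotienting by associates is worse: $1+\I$ and $-1+\I=\I(1+\I)$ are associates in $\IZ[\I]$ but give $m=5$ and $m=1$ respectively, so they cannot define isomorphic configurations. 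The correct relation, which the paper uses implicitly, is $(E,\psi)\cong(\tilde E,\tilde\psi)$ iff $\tilde\psi=\theta\psi\theta\inv$ for some isomorphism $\theta\colon E\to\tilde E$, and for commutative endomorphism rings this reduces to equality of the data (curve up to isomorphism, $\xi\in\End(E)\subset\IC$). You do state the right total at the end, but only by silently contradicting the equivalence relation you set up.

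Beyond that, the two concrete computations are deferred rather than carried out, and they are where the proof actually happens. The enumeration is short: $\xi$ has characteristic polynomial $t^2-at+2$ with $a=2\mathrm{Re}(\xi)\in\IZ$ and $a^2<8$, so $a\in\{0,\pm1,\pm2\}$, giving exactly the ten elements of Table \ref{tab: endos}; each generates the maximal order of its field, and those three fields have class number one, so $E$ is determined. For $m$ the paper computes the intersection of the graph of $\psi$ with the antidiagonal as the number of fixed points of $-\psi$, which by the holomorphic Lefschetz fixed-point formula is $p_\xi(-1)=3+a$; this makes ``each value of $m$ is attained exactly twice'' a one-line check rather than bookkeeping. (Also, your formula $\deg(1\pm\psi)=\deg\psi+1\mp\operatorname{tr}\psi$ has the sign reversed: $\deg(1+\psi)=1+\operatorname{tr}\psi+\deg\psi$.)
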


\begin{table}[ht]\caption{Endomorphisms  of degree 2  on elliptic curves}\label{tab: endos}
 \begin{center}
 \begin{tabular}{llcc}
 \toprule
$E=\IC/\Gamma$ & $\Gamma = \End(E)$ & $\xi$& $m$\\
\midrule
\multirow{2}*{$E_1$} &  \multirow{2}*{$\IZ\left[\I\right]$} & $ -1\pm \I$ & $1$\\
			  & & $ \phantom{-}1\pm \I$ & $5$\\
			  \midrule
$E_2$ & $\IZ\left[\I\sqrt{2}\right]$& $\pm \I\sqrt{2}$ & $3$\\
\midrule
\multirow{2}*{$E_3$}&  \multirow{2}*{$\IZ\left[\frac12(1+\I\sqrt{7})\right]$} & 
$ -\frac12( 1\pm \I\sqrt{7})$ & $ 2$\\
&&$ \phantom{-}\frac12(1\pm\I\sqrt{7})$ & $4$\\
\bottomrule
\end{tabular}
\end{center}
\end{table}

\begin{proof}
We need to recall some elementary facts about endomorphisms of elliptic curves. Details can be found for example in \cite[Ch.11]{silverman-Arithmetic} or \cite[Ch.II]{silverman-ArithmeticII}.
Any endomorphism $\psi$ of an elliptic curve $E$ is given by multiplication by a complex number $\xi$ and this embeds $\End E \into \IC$ as a maximal order in an imaginary quadratic number field $K\isom \End(E)\tensor \IQ$. The degree of the endomorphism $\psi$ coincides with the norm $N_{K/\IQ}(\xi)$.

Thus elements inducing an endomorphism of degree $2$ are  characterised as those $\xi \in \IC\setminus \IR$ that are integral over $\IZ$ with  characteristic polynomial
\[ p_\xi (t) = t^2-\mathrm{trace}_{K/\IQ}(\xi)t + N_{K/\IQ}(\xi) = t^2-2\mathrm{Re}(\xi) t+2 \in \IZ[t].\]
This gives exactly the elements listed in Table \ref{tab: endos} and each one of them is contained in a unique maximal order by \cite[Example 11.3.1]{silverman-Arithmetic} (see also 
 \cite[Prop. 2.3.1]{{silverman-ArithmeticII}}).

It remains to compute the invariant $m$, which is in our case the intersection of $ \Gamma_\psi=\bar F \subset E\times E$  with  the kernel of the addition map, that is, the anti-diagonal. Thus $m$ equals the number of fixed points of the endomorphism 
$-\psi$, which by the holomorphic Lefschetz fixed-point formula \cite[Ch.~3.4]{Griffiths-Harris} gives
\[m = \sum_{i=0}^2 (-1)^i\mathrm{trace}\left(-\psi_* |_{H_i(E, \IQ)}\right)= 1-\mathrm{trace}_{K/\IQ}(-\xi) + N_{K/\IQ}(-\xi)=p_\xi(-1), \]
because every fixed point of $\psi$ is simple.
\end{proof}

\begin{thebibliography}{Mum74}

\bibitem[FK91]{frey-kani}
Gerhard Frey and Ernst Kani.
\newblock {Curves of genus {$2$} covering elliptic curves and an arithmetical
  application}.
\newblock In {\em {Arithmetic algebraic geometry ({T}exel, 1989)}}, volume~89
  of {\em {Progr. Math.}}, pages 153--176. Birkh{\"a}user Boston, Boston, MA,
  1991.

\bibitem[FPR16]{FPR16b}
Marco Franciosi, Rita Pardini, and S{\"o}nke Rollenske.
\newblock {{Gorenstein} stable {Godeaux} surfaces}.
\newblock preprint, 2016.

\bibitem[GH78]{Griffiths-Harris}
Phillip Griffiths and Joseph Harris.
\newblock {\em {Principles of algebraic geometry}}.
\newblock Wiley-Interscience [John Wiley \& Sons], New York, 1978.
\newblock Pure and Applied Mathematics.

\bibitem[Kan97]{kani-number}
Ernst Kani.
\newblock {The number of curves of genus two with elliptic differentials}.
\newblock {\em J. Reine Angew. Math.}, 485:93--121, 1997.

\bibitem[Mir85]{miranda85}
Rick Miranda.
\newblock {Triple covers in algebraic geometry}.
\newblock {\em Amer. J. Math.}, 107(5):1123--1158, 1985.

\bibitem[Mum74]{Mumford_Abelian}
D.~Mumford.
\newblock {\em {Abelian varieties}}.
\newblock Oxford University Press, 1974.

\bibitem[Par91]{pardini91}
Rita Pardini.
\newblock {Abelian covers of algebraic varieties}.
\newblock {\em J. Reine Angew. Math.}, 417:191--213, 1991.

\bibitem[Sil94]{silverman-ArithmeticII}
Joseph~H. Silverman.
\newblock {\em Advanced topics in the arithmetic of elliptic curves}, volume
  151 of {\em Graduate Texts in Mathematics}.
\newblock Springer-Verlag, New York, 1994.

\bibitem[Sil09]{silverman-Arithmetic}
Joseph~H. Silverman.
\newblock {\em The arithmetic of elliptic curves}, volume 106 of {\em Graduate
  Texts in Mathematics}.
\newblock Springer, Dordrecht, second edition, 2009.

\end{thebibliography}
%
 \def\cprime{$'$}

\end{document}